 \renewcommand{\epsilon}{\varepsilon}
 \newcommand{\newsection}[1]
 {\subsection{#1}\setcounter{theorem}{0} \setcounter{equation}{0}
 \par\noindent}
 \newtheorem{theorem}{Theorem}
 \newtheorem{lemma}[theorem]{Lemma}
 \newtheorem{corr}[theorem]{Corollary}
 \newtheorem{proposition}[theorem]{Proposition}
 \newtheorem{deff}[theorem]{Definition}
 \newcommand{\bth}{\begin{theorem}}
 \newcommand{\ble}{\begin{lemma}}
 \newcommand{\bcor}{\begin{corr}}
 \newcommand{\bdeff}{\begin{deff}}
 \newcommand{\bprop}{\begin{proposition}}
 \newcommand{\ele}{\end{lemma}}
 \newcommand{\ecor}{\end{corr}}
 \newcommand{\edeff}{\end{deff}}
 \newcommand{\eprop}{\end{proposition}}
 \newcommand{\cd}{\, \cdot\, }
 \newcommand{\Rn}{{\mathbb R}^n}
 \newcommand{\Rf}{{\mathbb R}^4}
 \renewcommand{\Pi}{\varPi}
 \renewcommand{\epsilon}{\varepsilon}
  \newcommand{\R}{{\mathbb R}}
 \newcommand{\Kob}{{\mathcal K}}
 \newcommand{\Kout}{\Rn\backslash \Kob}
 \newcommand{\Kfout}{{\mathbb R}^4\backslash \Kob}
 \newcommand{\Stk}{S_T^\Kob}
 \newcommand{\extn}{{\R^n\backslash\mathcal{K}}}
\title[Global existence for nonlinear Dirichlet-wave equations]
{Concerning the Strauss conjecture and almost global existence for nonlinear Dirichlet-wave 
equations 
in 
$4$-dimensions}
\author{Yi Du}
\author{Jason Metcalfe}
\author{Christopher D. Sogge}
\author{Yi Zhou}
\address{School of Mathematical Science, Fudan University}
\address{Department of Mathematics, University of North Carolina, Chapel Hill}
\address{Department of Mathematics, Johns Hopkins University}
\address{School of Mathematical Science, Fudan University}
\date{October 4, 2007}
\begin{document}
\maketitle

\newsection{Introduction}

The purpose of this paper is to show that certain sharp existence theorems for small amplitude nonlinear wave equations in the Minkowski space setting extend to the case of nonlinear Dirichlet-wave equations outside of obstacles.  Our main result is that the obstacle version of the Strauss conjecture holds when the spatial dimension is equal to 4.  The corresponding result for 4-dimensional Minkowski space was first proved by one of us \cite{Zhou}.  In this paper we shall also show that H\"ormander's \cite{H} almost global existence theorem for quasilinear equations extends to nonlinear Dirichlet-wave equations.  

In recent years there has been much work on semilinear and quasilinear
nonlinear Dirichlet-wave equations with nonlinearities depending on
first and second derivatives of the solution $u$, but not on $u$
itself.  Recently, the first and last authors \cite{YY} were able to show that Lindblad's sharp existence theorem, involving general quadratic nonlinearities of all three types,  $Q(u,u',u'')$,  for $ \R_+\times{\mathbb R}^3$ extends to Dirichlet wave equations in $\R_+\times{\mathbb R}^3\backslash \Kob$ if $\Kob$ is star-shaped.  This was done by coupling Hardy-type estimates with a variant the weighted local energy estimates of Keel, Smith and one of the authors \cite{KSS}, \cite{KSS2}.  The variant involves weighted $L^2(dtdx)$ estimates for solutions $u$ of constant coefficient linear wave equations rather than the more traditional ``KSS estimates" involving the space-time gradient, $u'$.  This key weighted estimate was proved in \cite{YY} by adapting the earlier arguments from \cite{KSS} and \cite{KSS2}, but in this paper, we shall see that the estimates for constant coefficient wave equations are actually are an immediate corollary of the ``KSS estimates".  Estimates of this type are all one needs to handle semilinear equations.  To prove results for quasilinear equations we shall couple these constant coefficient estimates with 
 variable coefficient weighted $L^2$ estimates of the second and third authors \cite{MS}.  This allows a somewhat simpler approach than the one in \cite{YY} since one does not have to use the scaling vector field $L=t\partial_t + \langle \, x,\nabla_x\, \rangle$.  An apparent compromise, though, is that, while we can show that the natural variant of the Strauss conjecture is valid outside of nontrapping obstacles, our almost global existence results for quasilinear equations are for the star-shaped case.  On the other hand, our arguments allow one to give a different proof of H\"ormander's almost global existence theorem for $\R_+\times \R^4$ that avoids somewhat delicate commutator arguments involving the operator
\begin{equation}\label{D}
|D|^{-1}=1/\sqrt{-\Delta}\end{equation}
and the coefficients.
Here $\Delta=\partial^2_1+\partial^2_2+\cdots+\partial^2_n$, of course, is the standard Laplacian in $\Rn$.


Let us now state our main results.

 Our first is for equations of the form
\begin{equation}\label{Str}
\begin{cases}
\square u(t,x)=F_\sigma\bigl(u(t,x)\bigr)), \quad (t,x)\in \R_+\times \Kfout
\\
u(t,x)=0, \quad x\in \partial\Kob
\\
u(0,x)= f(x), \quad \partial_t u(0,x)= g(x), \quad x\in \Kfout,
\end{cases}
\end{equation}
where $\square = \partial_t^2-\Delta$.
We shall here assume that $f,g\in C^\infty(\Kfout)$ are fixed and vanish for large $x$, although this condition can be weakened to one where the data has certain small weighted Sobolev norms that are related to the estimates to follow.  We assume further that the nonlinear term behaves like $|u|^\sigma$, and so we assume that it satisfies the estimates
\begin{equation}\label{ass1}\sum_{0\le j\le 2} |u|^j \bigl|\partial_u^j F_\sigma(u)\bigr|\le C|u|^\sigma,
\end{equation}
when $u$ is small.

In order to solve \eqref{Str}, we must impose compatibility assumptions on the data $(f,g)$.  Such conditions are well known and we refer the reader to \cite{KSS2}.  Briefly,  for nonlinear equations of the form $\square u=N(u,u',u'')$\footnotemark, if we let $J_ku=\{\partial^\alpha_x u: \, 0\le |\alpha|\le k\}$, we can write $\partial_t^ku(0,\cd)=\psi_k(J_kf,J_{k-1}g)$, $0\le k\le m$, where $u$ is any formal $H^m$ local solution to the nonlinear Dirichlet-wave equation and $m$ is fixed.  The $\psi_k$ are called compatibility functions and depend on the nonlinearity $N$, $J_kf$ and $J_{k-1}g$.  The compatibility condition of order $m$ for the Dirichlet-wave equation $\square u=N(u,u',u'')$ with data $(f,g)\in H^m\times H^{m-1}$ requires that the $\psi_k$ vanish on $\partial\Kob$ when $0\le k\le m-1$.  Additionally, we say that $(f,g)\in C^\infty$ satisfy the compatibility condition to infinite order if the above condition holds for all $m$.  We shall assume that this is the case in the results to follow.

\footnotetext{Here, and in what follows, $u'=\partial u=\nabla u = (\partial_t
  u,\nabla_x u)$ denotes space-time gradients unless otherwise noted
  by a subscript.}

We can now state our first main result.

\begin{theorem}\label{theorem1}  Suppose that $F_\sigma$ is as above and that $\Kob\subset \Rf $
is a compact nontrapping obstacle with smooth boundary.  Suppose also that the data $(f,g)$ in \eqref{Str} vanish for $|x|>R$, with $R$ fixed, and satisfy the compatibility condition of order $3$.  Then, if $\sigma>2$, there is an $\varepsilon_0>0$, depending only on $\Kob$ and the constants in \eqref{ass1} so that \eqref{Str} has a global solution satisfying $(u(t,\cd),\partial_tu(t,\cd))\in H^3\times H^2$ for every $t>0$ assuming that
\begin{equation}\label{Stdata}
\sum_{|\alpha|\le 3}\|\partial^\alpha_x f\|_{L^2(\Kfout)}+\sum_{|\alpha|\le 2}\|\partial^\alpha_x g\|_{L^2(\Kfout)}\le \varepsilon,
\end{equation}
with $0<\varepsilon<\varepsilon_0$.
\end{theorem}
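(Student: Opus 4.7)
I would prove Theorem~\ref{theorem1} by Picard iteration. Set $u_{-1}\equiv 0$ and, for $k\ge 0$, let $u_k$ be the solution of the linear Dirichlet-wave problem
\begin{equation*}
\square u_k = F_\sigma(u_{k-1}), \quad u_k\big|_{\partial\Kob}=0, \quad (u_k,\partial_t u_k)\big|_{t=0}=(f,g).
\end{equation*}
The core task is to design a space-time norm $\|\cdot\|_{X_T}$ that combines the energy-type quantity $\sup_{0\le t\le T}\sum_{|\alpha|\le 3}\|\partial^\alpha u_k(t,\cd)\|_{L^2(\Kfout)}$ with a weighted $L^2(dt\,dx)$ norm of $u_k$ and $u_k'$ of the type advertised in the introduction, and to prove $\|u_k\|_{X_T}\le 2C_0\varepsilon$ uniformly in $k$ and $T$ whenever $\varepsilon$ is small.

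\textbf{Linear weighted estimate.} The first ingredient is the inhomogeneous weighted estimate: for $v$ solving $\square v=G$ outside the nontrapping obstacle $\Kob\subset\Rf$ with vanishing Dirichlet data on $\partial\Kob$ and zero Cauchy data,
\begin{equation*}
\|v\|_{X_T}\le C\,\|\langle x\rangle^{\mu}G\|_{L^2([0,T]\times\Kfout)}
\end{equation*}
for a suitable $\mu>0$. This is exactly the weighted $L^2(dt\,dx)$ bound highlighted in the introduction, which the authors observe is a corollary of the KSS estimates of~\cite{KSS},~\cite{KSS2} (used in tandem with Hardy's inequality in $\Rf$ to pass from $u'$ to $u$). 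I would upgrade to the $H^3$ level by commuting with $\partial_x^\alpha$---which preserves the Dirichlet condition---and, near $\partial\Kob$, by converting time derivatives into spatial ones via elliptic regularity for $-\Delta$; nontrapping is used here only through the underlying local energy decay implicit in the KSS bound.

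\textbf{Nonlinear estimate and closing the iteration.} The second ingredient is the nonlinear estimate
\begin{equation*}
\|\langle x\rangle^{\mu}F_\sigma(u)\|_{L^2([0,T]\times\Kfout)}\le C\,\|u\|_{X_T}^\sigma,
\end{equation*}
which follows from \eqref{ass1} and the Sobolev embedding $H^3(\Kfout)\hookrightarrow L^\infty$ (valid since $3>4/2$): one uses the $L^\infty_{t,x}$ bound from the energy part of $X_T$ on $\sigma-2$ of the $\sigma$ factors in $|u|^\sigma$, and controls the remaining two factors through the weighted $L^2(dt\,dx)$ piece of $X_T$. The hypothesis $\sigma>2$ is precisely what makes $\sigma-2$ strictly positive and supplies the superlinear smallness. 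Composing the two estimates yields $\|u_k\|_{X_T}\le C_0\varepsilon+C\|u_{k-1}\|_{X_T}^\sigma$; standard induction then gives the uniform bound $\|u_k\|_{X_T}\le 2C_0\varepsilon$. Running the same pair of estimates on the differences $u_k-u_{k-1}$ in a slightly weaker norm proves that $(u_k)$ is Cauchy, producing a global solution with the advertised regularity; the compatibility conditions of order $3$ on $(f,g)$ ensure that each iterate---and hence the limit---satisfies the Dirichlet condition on $\partial\Kob$.

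\textbf{Main obstacle.} The principal difficulty is calibrating the weight $\mu$ and the weighted $L^2$ components of $X_T$ so that the linear and nonlinear estimates are mutually compatible. A norm built purely from $\sup_t\|u(t)\|_{H^3}$ would force an estimate of $\int_0^T\|u(t)\|_{H^3}^\sigma\,dt$, which grows linearly in $T$ and cannot close; the point of the weighted $L^2(dt\,dx)$ piece---controlled directly in terms of $F_\sigma(u)$ rather than through the space-time gradient $u'$, as emphasized in the introduction---is precisely to eliminate this growth and yield a global-in-time bound. A secondary technical point is verifying that the weight $\mu$ delivered by the Hardy-augmented KSS estimate is the same one for which the Sobolev-based nonlinear bound gives $\sigma$-superlinearity; both sides depend delicately on the dimension $n=4$, and the scheme would fail for $\sigma=2$.
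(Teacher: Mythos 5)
There is a genuine gap, and it sits in both halves of your scheme. First, the linear estimate you posit, $\|v\|_{X_T}\le C\|\langle x\rangle^{\mu}\square v\|_{L^2([0,T]\times\Kfout)}$ with $\mu>0$ and with $X_T$ containing the weighted $L^2(dt\,dx)$ norm of the \emph{undifferentiated} solution plus uniform-in-$T$ energy, is not the estimate "highlighted in the introduction" and is not available from the KSS/Hardy toolbox. The paper's workhorse (Theorem \ref{kssyytheorem} and its exterior version \eqref{2.10}, \eqref{2.15}) measures the forcing in $L^1_t$ of the norm $\|\,|x|^{-(n-2)/2}F\|_{L^1_rL^2_\omega}+\|F\|_{L^2_x}$ — a Duhamel/$\dot H^{-1}$-flavored norm obtained by writing $w=\sum_j\partial_jv_j$ and applying the KSS bound to each $v_j$ — not in a weighted $L^2_{t,x}$ norm. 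The weighted-$L^2_{t,x}$-forcing estimates \eqref{2.11}--\eqref{2.12} are used only for forcing supported near the obstacle (the cutoff commutator terms), and the version for $w$ itself, \eqref{2.12}, is stated only under that support restriction; a global-in-$T$ bound of your form for the undifferentiated solution with a positive-power weight on the forcing does not follow from anything in the paper (and is problematic at low frequencies, which is exactly why the $\Delta^{-1}\partial_j$ trick is run through the $L^1_rL^2_\omega$ norm rather than a weighted $L^2_{t,x}$ norm).

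Second, your nonlinear step does not close as described. If the forcing is to be measured in $L^2_{t,x}$ (weighted or not), you cannot "control the remaining two factors through the weighted $L^2(dt\,dx)$ piece of $X_T$": the product of two $L^2_{t,x}$ factors lands in $L^1_{t,x}$, so one factor must carry a pointwise bound with \emph{spatial decay}, and plain $H^3(\Kfout)\hookrightarrow L^\infty$ gives none. The paper's mechanism is precisely to extract decay: the weighted Sobolev estimates on dyadic annuli involving the rotation vector fields $\Omega_{ij}$ (Lemma \ref{lemma3.1}) give $|u|\lesssim \langle x\rangle^{-3/2}\sum_{|\alpha|\le2,j\le1}\|Z^\alpha\nabla^ju\|_2$, whence the factor $|u|^{\sigma-2}$ is converted into the weight $\langle x\rangle^{-3(\sigma-2)/2}$ as in \eqref{38}; with the choice $\delta=3(\sigma-2)/4$ the $\langle x\rangle^{-1}$-weighted $L^1_rL^2_\omega$ forcing norm then splits into a product of two $\langle x\rangle^{-1/2-\delta}$-weighted $L^2$ factors, and this is exactly where $\sigma>2$ (i.e.\ $\delta>0$) is used to avoid the logarithmic loss attached to the $\delta=0$ weight. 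Commuting only with $\partial_x^\alpha$, as you propose, cannot produce the $\langle x\rangle^{-3/2}$ annulus decay at the $H^3$ regularity level (scaling on an annulus of radius $R$ loses positive powers of $R$ on the top-order terms), so the rotation fields are not an optional convenience but a structural ingredient. Finally, a smaller point: the paper avoids tracking compatibility conditions through the iteration by first producing a local solution, cutting it off with $\eta$, and iterating on $w=u-\eta u$ with zero data; your direct Picard iteration from the data $(f,g)$ would have to re-address compatibility at each step.
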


For simplicity we are assuming that the data are compactly supported.  The proof of the above result can easily be adapted to show that this assumption can be removed if for some fixed $\delta>0$ we assume that
$$
\sum_{|\alpha|\le 3} \| \langle x\rangle^{1+\delta}\partial^\alpha_x f\|_{L^2(\Rf\backslash \Kob)}+\sum_{|\alpha|\le 2} \| \langle x\rangle^{2+\delta}\partial^\alpha_x g\|_{L^2(\Rf\backslash \Kob)}
$$
is small.

Theorem \ref{theorem1} settles an $n=4$ exterior domain analog of the
Strauss conjecture \cite{Strauss2}, which
 regards global existence to equations of the
form \eqref{Str} for small data when there is no boundary.  In
particular, it states that there are global solutions provided that
$\sigma>p_c$ where $p_c>1$ solves
\[(n-1)p_c^2-(n+1)p_c-2=0.\]
This conjecture was inspired by the earlier work of John \cite{John} which
showed both global existence for $\sigma>p_c=1+\sqrt{2}$ and finite time
blow-up for $\sigma<p_c$ when $n=3$.  In $\R\times\R^n$, the conjecture has
since been resolved in all dimensions by Georgiev, Lindblad, and Sogge
\cite{GLS} and Tataru \cite{Ta}.  The interested reader should also consult the
references therein for some preceding partial results.  In particular,
as we mentioned before, the boundaryless $n=4$ case was first settled
by Zhou \cite{Zhou}.  A result of Sideris \cite{Si} shows that finite
time blow-up can occur for $\sigma<p_c$ for the corresponding
equations in $\R_+\times \R^n$, and in particular, shows that
$\sigma>2$ is necessary in Theorem \ref{theorem1}.


Our other main result concerns quasilinear equations of the form
\begin{equation}\label{q}
\begin{cases}
\square u(t,x)=Q(u,u',u''), \quad (t,x)\in \R_+\times \Kfout
\\
u(t,x)=0, \quad x\in \partial\Kob
\\
u(0,x)= f(x), \quad \partial_t u(0,x)= g(x), \quad x\in \Kfout,
\end{cases}
\end{equation}
where $Q$ is a smooth function of its arguments.
We assume further that the nonlinearity is of the form
\begin{equation}\label{form}
Q(u,u',u'')=A(u,u')+\sum_{0\le j,k\le 4} B^{jk}(u,u')\partial_j\partial_ku.
\end{equation}
Here and in what follows $\partial_0=\partial_t$. 
We assume that 
$A$ vanishes to second order at $(u,u')=(0,0)$, and that $B^{jk}(0,0)=0$ for all $0\le j,k\le 4$.  Additionally, we of course assume the symmetry condition
\begin{equation}\label{sym}
B^{jk}(u,u')=B^{kj}(u,u'), \quad 0\le j,k\le 4.
\end{equation}

Our other main result says that if the data is small enough then there is almost global existence for \eqref{q}:

\begin{theorem}\label{qthm}  Let $\Kob\subset \Rf$ be a smooth compact star-shaped obstacle and let $Q(u,u',u'')$ be as above.  Assume further that $(f,g)\in C^\infty(\Rf\backslash \Kob)$ vanishes for $|x|>R$, with $R$ fixed, and satisfies the compatibility conditions to infinite order.  Then there are constants $c,\varepsilon_0>0$ and an integer $N$ so that  if $0<\varepsilon<\varepsilon_0$ and
\begin{equation}\label{qdata}
\sum_{|\alpha|\le N+1}\|\partial^\alpha_xf\|_{L^2(\Rf\backslash\Kob)}
+\sum_{|\alpha|\le N}\|\partial^\alpha_xg\|_{L^2(\Rf\backslash\Kob)}\le \varepsilon,\end{equation}
then \eqref{q} has a unique solution $u\in C^\infty([0,T_\varepsilon)\times \Rf\backslash \Kob)$ with
\begin{equation}\label{te}
T_\varepsilon=\exp(c/\varepsilon).
\end{equation}
\end{theorem}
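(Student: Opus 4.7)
\emph{Overall approach.} The plan is to solve \eqref{q} by a Picard iteration and close it via a weighted energy bootstrap. Set $u_{-1}\equiv 0$ and define $u_k$ inductively as the solution to the linear Dirichlet problem
\[
\square u_k-\sum_{0\le j,l\le 4}B^{jl}(u_{k-1},u_{k-1}')\,\partial_j\partial_l u_k=A(u_{k-1},u_{k-1}'),
\]
with $u_k|_{\R_+\times\partial\Kob}=0$ and $(u_k,\partial_t u_k)(0,\cd)=(f,g)$. Since $\Kob$ is merely star-shaped, the only vector fields which both commute with $\square$ and preserve the Dirichlet condition are the space-time translations $\partial_j$ and the spatial rotations $\Omega_{jl}=x_j\partial_l-x_l\partial_j$; write $Z$ for this family. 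Applying $Z^\alpha$ for $|\alpha|\le N$ yields, for each iterate, a coupled system of the same form together with commutator errors of the schematic type $(Z^\beta B^{jl}(u_{k-1},u_{k-1}'))(\partial_j\partial_l Z^{\alpha-\beta} u_k)$.

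\emph{Controlling norm and linear input.} I would measure the iterates in
\[
M_N(T,u)=\sup_{0\le t\le T}\sum_{|\alpha|\le N}\|(Z^\alpha u)'(t,\cd)\|_{L^2(\Kfout)}+(\log(2+T))^{-1/2}\sum_{|\alpha|\le N}\|\langle x\rangle^{-1/2}(Z^\alpha u)'\|_{L^2([0,T]\times\Kfout)},
\]
augmented by the analogous weighted $L^2_{t,x}$ seminorm of $Z^\alpha u$ itself (without differentiation); as the introduction notes, this last piece is derivable here as an immediate corollary of the KSS estimate for $\square$, bypassing commutators of $|D|^{-1}$ with coefficients. Two linear ingredients do essentially all the work: the star-shaped-exterior KSS estimate for $\square$ of \cite{KSS,KSS2}, and the variable-coefficient weighted $L^2$ estimate of \cite{MS}, which transfers the same bound to $\square-\sum B^{jl}\partial_j\partial_l$ provided $\sum_{j,l}\|B^{jl}\|_{L^\infty_{t,x}}$ and a suitable $L^1_t L^\infty_x$ norm of $\partial B^{jl}$ are small---conditions ensured by the bootstrap hypothesis $M_N(T,u_{k-1})\le 2C_0\varepsilon$.

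\emph{Nonlinear closure and lifespan.} In $\R^4$ the Klainerman--Sobolev inequality available with $Z$ alone (no scaling, no Lorentz boosts) provides pointwise decay only in the spatial variable:
\[
|Z^\alpha u(t,x)|\lesssim \langle x\rangle^{-3/2}\sum_{|\beta|\le|\alpha|+2}\|Z^\beta u(t,\cd)\|_{L^2(\Kfout)}.
\]
Combined with the two weighted $L^2_{t,x}$ pieces entering $M_N$, this lets me factor every quadratic product appearing in $Z^\alpha A(u_{k-1},u_{k-1}')$ and in the commutator terms $(Z^\beta B^{jl})(\partial_j\partial_l Z^{\alpha-\beta} u_k)$ as a Sobolev-bounded $\langle x\rangle^{3/2}$-weighted $L^\infty$ factor times a $\langle x\rangle^{-3/2}$-weighted $L^2_{t,x}$ factor. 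Each weighted $L^2_{t,x}$ factor costs a $(\log(2+T))^{1/2}$ loss, so the final estimate takes the form
\[
M_N(T,u_k)\le C_0\varepsilon + C_1\log(2+T)\,M_N(T,u_{k-1})^2\bigl(1+M_N(T,u_{k-1})\bigr).
\]
The bootstrap $M_N(T,u_k)\le 2C_0\varepsilon$ then persists as long as $\log(2+T)\le c/\varepsilon$, which is precisely \eqref{te}. An analogous estimate applied to $u_k-u_{k-1}$ in a slightly weaker norm, whose right-hand side vanishes to first order in the difference, gives contraction; standard local regularity propagation then promotes the limit to $C^\infty$.

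\emph{Main difficulty.} The principal technical obstacle is arranging the variable-coefficient weighted $L^2$ estimate of \cite{MS} so that smallness of the coefficient is measured in precisely the $L^\infty_{t,x}$ and $L^1_tL^\infty_x$ norms controlled by $M_N$, without any auxiliary use of the scaling vector field $L=t\partial_t+\langle x,\nabla_x\rangle$---which is not admissible in the exterior setting. This is what makes the substitution of the direct weighted estimate on $u$ in place of $|D|^{-1}u'$ essential, and it is also what ultimately couples the iteration to the $\log(2+T)$ loss that produces the $\exp(c/\varepsilon)$ lifespan rather than genuine global existence.
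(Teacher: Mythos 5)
Your overall strategy (quasilinear iteration with the coefficients frozen at the previous iterate, star-shaped exterior KSS estimates plus the variable-coefficient weighted $L^2$ estimates of \cite{MS}, weighted space-time norms normalized by $(\log(2+T))^{-1/2}$, and closure because the quadratic interaction only costs $\varepsilon^2\log(2+T)\lesssim\varepsilon$ when $T\le\exp(c/\varepsilon)$) is the same as the paper's. But two of your key steps have genuine gaps. First, you assert that the variable-coefficient estimate of \cite{MS} applies ``provided $\sum\|B^{jl}\|_{L^\infty_{t,x}}$ and a suitable $L^1_tL^\infty_x$ norm of $\partial B^{jl}$ are small,'' and that the bootstrap supplies this. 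It does not: with only translations and rotations there is no time decay, so under the bootstrap $\|\partial B^{jl}(u_{k-1},u_{k-1}')\|_{L^\infty_x}$ is merely $O(\varepsilon)$ uniformly in $t$, and its $L^1_t$ norm over $[0,T_\varepsilon]$ is of size $\varepsilon e^{c/\varepsilon}$, which is enormous; any estimate that genuinely needed this quantity small (or that exponentiated it, as a crude energy argument would) could not reach almost global times. What Lemmas \ref{lemma4.1} and \ref{lemma4.2} actually require is only the pointwise smallness \eqref{4.4} of $h$ itself; the $\partial h$ contributions are not hypotheses but explicit quadratic error terms on the right-hand side of \eqref{4.5}--\eqref{4.6} (the terms $III$ and $VI$ in \eqref{4.13}), which must then be absorbed by the same weighted space-time $L^2$ mechanism, again paying $\log(2+T)$. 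Your write-up omits this absorption step, which is where the real work for the quasilinear terms lies.

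Second, your nonlinear closure rests on the pointwise bound $|Z^\alpha u(t,x)|\lesssim\langle x\rangle^{-3/2}\sum_{|\beta|\le|\alpha|+2}\|Z^\beta u(t,\cd)\|_{L^2}$, used to put one factor of each product in $\langle x\rangle^{3/2}$-weighted $L^\infty_{t,x}$. This has two problems. The inequality as stated is false in four dimensions with only two extra derivatives (it is the borderline $H^2\not\hookrightarrow L^\infty$ failure); one needs the mixed form of Lemma \ref{lemma3.1}, i.e.\ two angular plus one radial derivative, on dyadic annuli. More seriously, its right-hand side involves the \emph{undifferentiated} $\|Z^\beta u(t,\cd)\|_{L^2}$, which your controlling norm $M_N$ does not contain (you only control $\sup_t\|(Z^\alpha u)'(t,\cd)\|_2$ and weighted space-time norms), and which is not furnished by energy or Hardy estimates. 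The paper avoids exactly this by never taking a global-in-time $L^\infty$ bound on the undifferentiated factor: it applies \eqref{31} at fixed $t$ on an annulus $|x|\approx R$, splits the weight $R^{-3/2}$ between the two factors, and then Cauchy--Schwarz in $t$ places the low, undifferentiated factor into the $\langle x\rangle^{-1/2}$-weighted space-time norm $N_k(T)$ of \eqref{4.12} (and Lemma \ref{lemma40} handles the terms with no derivative on the high factor). This is precisely why the separate constant-coefficient estimate \eqref{2.15} for the undifferentiated solution is needed, and why the scheme carries the pair $(M_k,N_k)$ rather than a single energy-type norm; as written, your bootstrap inequality cannot be closed in the norm you chose.
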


For simplicity we have only stated our results for scalar equations.
However, since the arguments only involve the standard vector fields
$\partial_j$ and the generators of spatial rotations $\Omega_{ij}$
described below, they can easily be modified to handle multi-speed
symmetric systems as were treated in earlier related work, such as
\cite{KSS2} and \cite{MS2, MS3}.  

The authors are grateful to Kunio Hidano for helpful comments on an early draft of this paper.

\newsection{Main estimates}

As in \cite{YY} we shall require some estimates which can be thought of as variations of the classical Hardy inequality.  They involve mixed-norms $L^q_rL^q_\omega$ with respect to the standard volume element $r^{n-1}drd\omega$ for ${\mathbb R}^n$, with $d\omega$ denoting the induced Lebesgue measure on $S^{n-1}$.  Thus,
$$\|h\|_{L^p_rL^q_\omega}=\bigl\| \, \|h(r\cdot)\|_{L^q(S^{n-1}, d\omega)}\, \bigr\|_{L^p([0,\infty),r^{n-1}dr)}.$$

\begin{lemma}\label{lemmahar}
If $v\in C^\infty_0({\mathbb R}^n)$, $n\ge3$, and $R>0$, then
\begin{equation}\label{h.1}
R^{1/2}\|v\|_{L^\infty_rL^2_\omega(|x|>R)}\le C\|\,
|x|^{-(n-3)/2}\nabla_xv\|_{L^2(|x|>R)}.
\end{equation}
\end{lemma}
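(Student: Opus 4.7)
The plan is a straightforward radial fundamental theorem of calculus followed by a weighted Cauchy--Schwarz. Since $v\in C^\infty_0(\mathbb{R}^n)$, for each fixed $\omega\in S^{n-1}$ and each $r>R$, integrating $\partial_s(|v(s\omega)|^2)$ from $r$ to $\infty$ gives
\begin{equation*}
|v(r\omega)|^2 = -2\int_r^\infty \mathrm{Re}\bigl(v\,\overline{\partial_s v}\bigr)(s\omega)\,ds.
\end{equation*}
Integrating over $\omega$ and writing $g(r):=\|v(r\,\cdot\,)\|_{L^2(S^{n-1})}$, I obtain
\begin{equation*}
g(r)^2 \le 2\int_r^\infty\!\!\int_{S^{n-1}} |v(s\omega)|\,|\partial_s v(s\omega)|\,d\omega\,ds.
\end{equation*}

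The key step is to split the weights inside Cauchy--Schwarz as $|v|\,|\partial_s v| = (s^{-1}|v|)(s\,|\partial_s v|)$. The factor involving $s|\partial_s v|$, once squared and integrated against $d\omega\,ds$, produces exactly the claimed weighted norm on the right, because the Euclidean volume element in $|x|>R$ reads $s^{n-1}\,d\omega\,ds$ and $s^{-(n-3)}\cdot s^{n-1}=s^2$, while $|\partial_s v|\le |\nabla_x v|$. The other factor $\int_r^\infty s^{-2} g(s)^2\,ds$ is controlled using $\int_r^\infty s^{-2}\,ds = 1/r \le 1/R$ together with the pointwise bound $g(s)\le \sup_{\rho>R}g(\rho)$, yielding a bound of $R^{-1}(\sup_{\rho>R}g(\rho))^2$.

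Combining these two bounds produces, for every $r>R$,
\begin{equation*}
g(r)^2 \;\le\; 2\,R^{-1/2}\Bigl(\sup_{\rho>R}g(\rho)\Bigr)\,\bigl\|\,|x|^{-(n-3)/2}\nabla_xv\bigr\|_{L^2(|x|>R)}.
\end{equation*}
Since $v\in C^\infty_0$, $g$ is continuous with compact support, so $\sup_{\rho>R}g(\rho)<\infty$. Taking the supremum over $r>R$ on the left and dividing by $\sup g$ (which may be assumed positive, else the inequality is trivial) gives the claim.

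I do not foresee any real obstacle here. The only substantive choice is the pair of exponents in the Cauchy--Schwarz split: the weight $s^{-1}$ on the $|v|$ side is dictated by the requirement that $\int s^{-2}\,ds$ converge at infinity and produce the gain $R^{-1/2}$, and the complementary weight $s$ on the $|\partial_s v|$ side is precisely what is needed to reconstruct the norm $\||x|^{-(n-3)/2}\nabla_x v\|_{L^2(|x|>R)}$ via the factor $s^{n-1}$ from the volume element; no further choice or delicate estimate is involved.
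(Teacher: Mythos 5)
Your proof is correct and follows essentially the same route as the paper: a radial fundamental-theorem-of-calculus identity, Cauchy--Schwarz with the weight split $|v|\,|\partial_s v|=(s^{-1}|v|)(s|\partial_s v|)$ so that the volume element $s^{n-1}\,ds\,d\omega$ reproduces the norm $\|\,|x|^{-(n-3)/2}\nabla_x v\|_{L^2(|x|>R)}$, and absorption of the resulting factor of $\|v\|_{L^\infty_rL^2_\omega(|x|>R)}$ (finite since $v\in C^\infty_0$) into the left-hand side. The only cosmetic difference is that the paper keeps half-powers of both factors throughout rather than taking the supremum at the end, which changes nothing of substance.
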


\begin{proof}  The left side of \eqref{h.1} is dominated by
\begin{align*}
R^{1/2}\Bigl(\int_{S^{n-1}}\int_R^\infty &|v(\rho\omega)| \,
|\partial_\rho v(\rho\omega)| \, d\rho d\omega\Bigr)^{1/2}
\\
&\le \Bigl(\int_{S^{n-1}}\int_R^\infty |\partial_\rho v|^2 \rho^2 d\rho
d\omega\Bigr)^{1/4}R^{1/2}\Bigl(\int_R^\infty
\int_{S^{n-1}}|v(\rho\omega)|^2d\omega \tfrac{d\rho}{\rho^2}\Bigr)^{1/4}
\\
&\le \|\, |x|^{-(n-3)/2}\nabla_xv\|_{L^2(|x|>R)}^{1/2}\,
R^{1/4}\|v\|^{1/2}_{L^\infty_rL^2_\omega(|x|>R)},
\end{align*}
which yields \eqref{h.1}.\end{proof}

Using this result we easily get another useful result.

\begin{lemma}\label{lemmahar2}  Let $n\ge3$.  Then if $h\in C^\infty_0({\mathbb R}^n)$
\begin{equation}\label{h.2}
\|h\|_{\dot H^{-1}({\mathbb R}^n)} \le C\|h\|_{L^{2n/(n+2)}(|x|<1)} + C\|\, |x|^{-(n-2)/2} h\|_{L^1_rL^2_\omega(|x|>1)}.
\end{equation}
\end{lemma}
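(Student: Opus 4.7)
The plan is to use the duality characterization $\|h\|_{\dot H^{-1}(\Rn)} = \sup\{\,|\int hv\,dx| : v \in C^\infty_0(\Rn),\ \|\nabla v\|_{L^2} \le 1\,\}$, and for each such test function $v$ split $\int hv\,dx = \int_{|x|<1} hv\,dx + \int_{|x|>1} hv\,dx$, estimating the two pieces by different tools so as to produce the two terms on the right side of \eqref{h.2}.

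For the inner piece, the plan is to apply H\"older's inequality with dual exponents $2n/(n+2)$ and $2n/(n-2)$ followed by the Sobolev embedding $\dot H^1(\Rn) \hookrightarrow L^{2n/(n-2)}(\Rn)$ (which is legal precisely because $n \ge 3$):
\begin{equation*}
\Bigl|\int_{|x|<1} hv\,dx\Bigr| \le \|h\|_{L^{2n/(n+2)}(|x|<1)}\,\|v\|_{L^{2n/(n-2)}(\Rn)} \le C\|h\|_{L^{2n/(n+2)}(|x|<1)}\|\nabla v\|_{L^2}.
\end{equation*}
For the outer piece, the plan is to pass to polar coordinates and apply Cauchy--Schwarz on each sphere, then split the resulting integrand as $[r^{-(n-2)/2}\|h(r\cdot)\|_{L^2_\omega}r^{n-1}]\cdot[r^{(n-2)/2}\|v(r\cdot)\|_{L^2_\omega}]$ and pull the second factor out as a supremum in $r$:
\begin{equation*}
\Bigl|\int_{|x|>1} hv\,dx\Bigr| \le \int_1^\infty \|h(r\cdot)\|_{L^2_\omega}\|v(r\cdot)\|_{L^2_\omega}\,r^{n-1}\,dr \le \bigl(\sup_{r>1} r^{(n-2)/2}\|v(r\cdot)\|_{L^2_\omega}\bigr)\,\|\,|x|^{-(n-2)/2}h\|_{L^1_r L^2_\omega(|x|>1)}.
\end{equation*}

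The only remaining ingredient, and the lone place where genuine work is done, is the weighted pointwise bound $r^{(n-2)/2}\|v(r\cdot)\|_{L^2_\omega} \le C\|\nabla v\|_{L^2(\Rn)}$ for $r > 1$, which is exactly where Lemma \ref{lemmahar} enters. Applying that lemma with $R = r$ gives $r^{1/2}\|v(r\cdot)\|_{L^2_\omega} \le C\|\,|x|^{-(n-3)/2}\nabla v\|_{L^2(|x|>r)}$, and since $|x|^{-(n-3)/2} \le r^{-(n-3)/2}$ on $\{|x|>r\}$ (using $n \ge 3$), the right side is dominated by $Cr^{-(n-3)/2}\|\nabla v\|_{L^2}$; multiplying through by $r^{(n-3)/2}$ yields the desired inequality. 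I anticipate no serious obstacle: the main point is conceptual, namely choosing the right decomposition and recognizing that the factor $r^{(n-3)/2}$ hidden in the weight of Lemma \ref{lemmahar} combines with the explicit $r^{1/2}$ there to produce exactly the $r^{(n-2)/2}$ demanded by the duality pairing against $\|\,|x|^{-(n-2)/2}h\|_{L^1_r L^2_\omega}$.
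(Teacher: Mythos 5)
Your proposal is correct and follows essentially the same route as the paper: the paper splits $h$ into its pieces on $\{|x|<1\}$ and $\{|x|>1\}$, handles the first by the dual Sobolev embedding $L^{2n/(n+2)}\hookrightarrow \dot H^{-1}$, and handles the second by polarization plus the weighted bound $\|\,|x|^{(n-2)/2}v\|_{L^\infty_rL^2_\omega}\le C\|\nabla_x v\|_{2}$ deduced from \eqref{h.1}, which is exactly your duality argument with the split performed inside the pairing instead of on $h$ itself. The small computation you spell out (trading $r^{1/2}$ and the weight $|x|^{-(n-3)/2}$ for $r^{(n-2)/2}$) is precisely the step the paper leaves implicit in saying the needed bound is a consequence of \eqref{h.1}.
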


\begin{proof}  Write $h=h_0+h_1$ where $h_0$ equals $h$ when $|x|<1$ and zero otherwise.  By Sobolev's inequality, the $\dot H^{-1}$ norm of $h_0$ is dominated by the first term in the right side of \eqref{h.2}.  As a result, it suffices to show that if $g=h_1$ then
$$\|g\|_{\dot H^{-1}({\mathbb R}^n)}\le C\|\, |x|^{-(n-2)/2} g\|_{L^1_rL^2_\omega({\mathbb R}^n)}.$$
By polarization, this is equivalent to showing that when $v\in \dot H^{1}\cap C^\infty_0$ we have
$$\Bigl|\, \int gv\, dx \, \Bigr|\le C\|\, |x|^{-(n-2)/2}g\|_{L^1_rL^2_\omega}\|\nabla_x v\|_2.$$
By H\"older's inequality, this in turn would follow from
$$\|\, |x|^{(n-2)/2}v\, \|_{L^\infty_rL^2_\omega}\le C\|\nabla_x v\|_2,$$
and since this is a consequence of \eqref{h.1}, the proof is complete.
\end{proof}

We shall also need the ``KSS estimate"\footnotemark\,
from \cite{KSS}.  It concerns solutions of the inhomogeneous wave equation
\begin{equation}\label{inhom}
\begin{cases} (\partial^2_t-\Delta) v(t,x)=G(t,x), \quad (t,x)\in {\mathbb R}_+\times {\mathbb R}^n
\\
v(0,\cd)=\partial_tv(0,\cd)=0,
\end{cases}
\end{equation}
and says that when $n\ge3$ and $S_T=[0,T]\times{\mathbb R}^n$, we have the uniform bounds
\begin{multline}\label{kssest}
\bigl(\log(2+T)\bigr)^{-1/2}\|\langle x
\rangle^{-1/2}v'\|_{L^2(S_T)} + \|\langle
x\rangle^{-1/2-\delta}v'\|_{L^2(S_T)}+\|v'(T,\cd)\|_{L^2(\Rn)}
\\
\le C_\delta\int_0^T\|G(t,\cd)\|_2\, dt,
\end{multline}
if $\delta>0$ is fixed.  Here $\langle x\rangle = (1+|x|^2)^{1/2}$.

\footnotetext{We borrow this name from \cite{A2}, though this is akin
  to earlier estimates from, e.g., \cite{M2}, \cite{Strauss}, and \cite{KPV}. }

Strictly speaking, only the 3-dimensional version was proved in
\cite{KSS}, although a generalization to variable coefficients for all
$n\ge3$ was proved by the second and third authors in \cite{MS}.  We shall
need the latter in \S 4 when we prove almost global results for
quasilinear equations.  To prove the constant coefficient estimate
\eqref{kssest} one first uses the fact that an analog without the
log-factor holds if one only takes the $L^2$ norm over $[0,T]\times
\{x\in {\mathbb R}^n: \, |x|<1\}$.  This can be proven using the
Fourier transform, as was shown in \cite{SS}.  By a scaling argument, for $k=1,2,3,\dots$, one can uniformly control
$$\|\, \langle x\rangle^{-1/2}v'\|_{L^2([0,T]\times \{x: \, |x|\in [2^k,2^{k+1}]\})}$$
by the right side of \eqref{kssest}.  Since, by the standard energy inequality one also has that
$$\|\, \langle x\rangle^{-1/2}v'\|_{L^2([0,T]\times \{x: \, |x|>T\})},$$
is controlled by the right side of \eqref{kssest},
one obtains \eqref{kssest} by summing up the squares of these bounds.

Let us now see how we can use \eqref{h.2} and \eqref{kssest} to prove
a variant of the inequality that 
the first and last authors used in \cite{YY} to obtain 
optimal existence results for quasilinear equations in 3-dimensions:
\begin{theorem}\label{kssyytheorem}
Let $n\ge3$, and suppose that $w$ solves the inhomogeneous wave equation
\begin{equation}\label{2.5}
\begin{cases}
(\partial_t^2-\Delta)w(t,x)=F(t,x), \quad (t,x)\in {\mathbb R}_+\times {\mathbb R}^n
\\ w(0,\cd)=\partial_tw(0,\cd)=0.
\end{cases}
\end{equation}
Then if $\delta>0$ is fixed, we have the uniform bounds
\begin{multline}\label{kssyy}
\bigl(\log(2+T)\bigr)^{-1/2}\|\langle x \rangle^{-1/2}w\|_{L^2(S_T)}
+ \|\langle x\rangle^{-1/2-\delta}w\|_{L^2(S_T)}+\|w(T,\cd)\|_{L^2(\Rn)}
\\
\le C_\delta \int_0^T\|\,
|x|^{-(n-2)/2}F(t,\cd)\|_{L^1_rL^2_\omega(|x|>1)} \:dt+C_\delta \int_0^T
\|F(t,\cd)\|_{L^{2n/(n+2)}(|x|<1)}\:dt.
\end{multline}
\end{theorem}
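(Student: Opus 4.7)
The plan is to derive \eqref{kssyy} from the KSS estimate \eqref{kssest} by exploiting the fact that the Fourier multiplier $|D|^{-1}=(-\Delta)^{-1/2}$ commutes with $\square$, and then to invoke Lemma~\ref{lemmahar2} to recognize the right-hand side. The first step will be to set $v=|D|^{-1}w$. Since $|D|^{-1}$ is a time-independent multiplier, $v$ inherits vanishing Cauchy data from $w$ and solves $\square v=|D|^{-1}F$. Applying \eqref{kssest} to $v$ with source $G=|D|^{-1}F$ gives
\[
(\log(2+T))^{-1/2}\|\langle x\rangle^{-1/2}v'\|_{L^2(S_T)}+\|\langle x\rangle^{-1/2-\delta}v'\|_{L^2(S_T)}+\|v'(T,\cd)\|_{L^2(\Rn)}\le C\int_0^T\||D|^{-1}F(t,\cd)\|_{L^2(\Rn)}\,dt = C\int_0^T\|F(t,\cd)\|_{\dot H^{-1}(\Rn)}\,dt.
\]

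The second step will be to replace $v'$ by $w$ on the left. Writing $w=|D|v=-\sum_{j=1}^n R_j\partial_j v$ with $R_j=\partial_j|D|^{-1}$ the usual Riesz transforms, I would use that $\langle x\rangle^{-2\alpha}$ is a Muckenhoupt $A_2$ weight on $\Rn$ for every $\alpha\in(0,n/2)$, so the Riesz transforms are bounded on $L^2(\Rn,\langle x\rangle^{-2\alpha}dx)$. Taking $\alpha=1/2$ and $\alpha=1/2+\delta$ then yields the pointwise-in-$t$ spatial bound
\[
\|\langle x\rangle^{-\alpha}w(t,\cd)\|_{L^2(\Rn)}\le C\sum_{j}\|\langle x\rangle^{-\alpha}R_j\partial_j v(t,\cd)\|_{L^2(\Rn)}\le C\|\langle x\rangle^{-\alpha}v'(t,\cd)\|_{L^2(\Rn)},
\]
while the unweighted endpoint $\|w(T,\cd)\|_{L^2}=\|\nabla_xv(T,\cd)\|_{L^2}\le\|v'(T,\cd)\|_{L^2}$ is immediate from Plancherel. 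Squaring and integrating in $t$ and combining with the previous display gives
\[
(\log(2+T))^{-1/2}\|\langle x\rangle^{-1/2}w\|_{L^2(S_T)}+\|\langle x\rangle^{-1/2-\delta}w\|_{L^2(S_T)}+\|w(T,\cd)\|_{L^2(\Rn)}\le C_\delta\int_0^T\|F(t,\cd)\|_{\dot H^{-1}(\Rn)}\,dt.
\]

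The final step is to apply Lemma~\ref{lemmahar2} with $h=F(t,\cd)$ and integrate in $t$; this bounds $\int_0^T\|F(t,\cd)\|_{\dot H^{-1}(\Rn)}\,dt$ by the right-hand side of \eqref{kssyy}, closing the argument.

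The main obstacle I anticipate is the transition from $v'$ to $w=|D|v$, which involves the non-local Riesz transforms acting against the spatial weights. The key observation that removes this obstacle is that the weights in question are $A_2$ and so the Riesz transforms are automatically bounded on the corresponding weighted $L^2$ spaces --- in particular, one never computes a commutator between $|D|^{-1}$ and a coefficient, which is precisely the feature highlighted in the introduction as distinguishing this approach from the one in \cite{YY}. The rest of the proof is essentially bookkeeping reorganizing the norms.
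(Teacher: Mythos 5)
Your proof is correct, and its skeleton coincides with the paper's: both reduce \eqref{kssyy} to the KSS estimate \eqref{kssest} by applying it to an order $-1$ modification of $w$ whose forcing is controlled by $\|F(t,\cd)\|_{\dot H^{-1}(\Rn)}$, and both then use Lemma \ref{lemmahar2} to convert the $\dot H^{-1}$ norm into the two norms on the right side of \eqref{kssyy}. The difference lies in the device used to pass back from the auxiliary unknown to $w$. The paper applies \eqref{kssest} to the $n$ functions $v_j$ with spatial Fourier transform $\hat w(t,\xi)\,\xi_j/|\xi|^2$; since $\sum_j\partial_j v_j=iw$, the function $w$ is pointwise dominated by $\sum_j|v_j'|$, so every weighted norm on the left of \eqref{kssyy} transfers trivially, with no weighted Calder\'on--Zygmund theory at all. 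You instead take the single function $v=|D|^{-1}w$ and must convert $v'$ back into $w=|D|v=-\sum_jR_j\partial_jv$, which costs you the boundedness of the Riesz transforms on $L^2(\langle x\rangle^{-2\alpha}dx)$ via the $A_2$ condition. That input is correct, but note a small caveat: $\langle x\rangle^{-2\alpha}\in A_2$ requires $\alpha<n/2$, so for $\delta\ge (n-1)/2$ your argument as written does not apply with $\alpha=1/2+\delta$; one should first remark that, since $\langle x\rangle^{-1/2-\delta}\le\langle x\rangle^{-1/2-\delta'}$ when $\delta\ge\delta'$, it suffices to prove \eqref{kssyy} for small $\delta$. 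With that observation your argument closes; the paper's choice of the $v_j$ buys exactly the elimination of the weighted Riesz-transform step (and of any restriction on $\delta$), at the price of carrying $n$ auxiliary functions instead of one.
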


\begin{proof}  To prove \eqref{kssyy}, we apply \eqref{kssest} to
$$v_j(t,x)=(2\pi)^{-n}\int_{{\mathbb R}^n}e^{ix\cdot\xi}\Hat w(t,\xi) \frac{\xi_j }{|\xi|^2}\, d\xi,\quad j=1,2,\dots,n,$$
where $\Hat w(t,\xi)$ denotes the spatial Fourier transform of $x\to
w(t,x)$.  If we do this, we get \eqref{kssyy} since $\sum_{j=1}^n
\partial_jv_j=i w$ and the $v_j$ solve the inhomogeneous wave equations $(\partial_t^2-\Delta)v_j=G_j$ with zero initial data and forcing terms satisfying 
$$\|G_j(t,\cd)\|_{L^2({\mathbb R}^n)}\le \|F(t,\cd)\|_{\dot
  H^{-1}({\mathbb R}^n)}.$$
\end{proof}

If we use H\"older's inequality, we see that \eqref{kssyy} immediately yields
\begin{multline}\label{kyy}
\bigl(\log(2+T)\bigr)^{-1/2}\|\langle x \rangle^{-1/2}w\|_{L^2(S_T)}
+ \|\langle x\rangle^{-1/2-\delta}w\|_{L^2(S_T)}
\\
\le C_\delta \int_0^T\bigl(\, \|\, 
\langle x\rangle^{-(n-2)/2}(\partial^2_t-\Delta)w(t,\cd)\|_{L^1_rL^2_\omega({\mathbb R}^n)} 
+\|(\partial^2_t-\Delta)w(t,\cd)\|_{L^2({\mathbb R}^n)}\, \bigr)\, dt,
\end{multline}
if, as above, $w$ has vanishing initial data.

Let us now see that this estimate extends to the setting of Dirichlet-wave equations outside of nontrapping obstacles.  To be more specific, we shall assume that $\Kob\subset {\mathbb R}^n$ has smooth boundary and is nontrapping.  There is no loss of generality in also assuming in what follows that $0\in \Kob$ and $\Kob\subset \{x\in {\mathbb R}^n: \, |x|<1\}$.  We shall be concerned with the inhomogeneous wave equations outside $\Kob$:
\begin{equation}\label{2.8}
\begin{cases}
(\partial^2_t-\Delta)w(t,x)=F(t,x), \quad (t,x)\in {\mathbb R}+\times \Kout
\\
w(t,x)=0, \quad x\in \partial\Kob
\\
w(0,\cd)=\partial_tw(0,\cd)=0.
\end{cases}
\end{equation}

The main estimates we require for solutions of this equation are contained in the following:

\begin{theorem}\label{kyob}
Let  $n\ge3$, and let $\Kob\subset \Rn$ and $w$ be as above. For $T>0$, set $S_T^\Kob = [0,T]\times \Kout$.  Then if $\delta>0$
\begin{multline}\label{2.9}
\bigl(\log(2+T)\bigr)^{-1/2}\|\langle x
\rangle^{-1/2}w'\|_{L^2(\Stk)} + \|\langle
x\rangle^{-1/2-\delta}w'\|_{L^2(\Stk)} 
+\|w'(T,\cd)\|_{L^2(\Kout)}
\\
\le C_\delta\int_0^T\|F(t,\cd)\|_{L^2(\Kout)}\, dt,
\end{multline}
and
\begin{multline}\label{2.10}
\bigl(\log(2+T)\bigr)^{-1/2}\|\langle x \rangle^{-1/2}w\|_{L^2(\Stk)}
+ \|\langle x\rangle^{-1/2-\delta}w\|_{L^2(\Stk)}+\|w(T,\cd)\|_{L^2(\Kout)} 
\\
\le C_\delta \int_0^T\bigl(\, \|\, 
\langle x\rangle^{-(n-2)/2}F(t,\cd)\|_{L^1_rL^2_\omega(\Kout)} 
+\|F(t,\cd)\|_{L^2(\Kout)}\, \bigr)\, dt.
\end{multline}
\end{theorem}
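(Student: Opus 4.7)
The plan is to reduce the two obstacle estimates to their boundaryless counterparts from Section~2, taking advantage of the hypothesis $\Kob\subset\{|x|<1\}$: every dyadic annulus $\{2^k\le|x|\le 2^{k+1}\}$ with $k\ge 0$ lies entirely in free space, and only $\Kout\cap\{|x|<1\}$ genuinely sees the boundary.

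For \eqref{2.9}, I would repeat the dyadic sketch given after \eqref{kssest}, now in the obstacle setting. The local piece on $[0,T]\times(\Kout\cap\{|x|<1\})$ is covered by the variable-coefficient KSS estimate of the second and third authors \cite{MS}, which accommodates the Dirichlet boundary condition and plays the role of the \cite{SS} Fourier-transform estimate used in the free case. On each dyadic annulus $\{2^k\le|x|\le 2^{k+1}\}$ with $k\ge 1$, a rescaling of the free-space local estimate applies directly since the annulus lies in $\R^n\setminus\Kob$. The standard energy identity handles $\{|x|>T\}$, and squaring and summing the dyadic bounds in $k$ produces the $(\log(2+T))^{-1/2}$ factor on the first term.

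For \eqref{2.10}, the plan is to adapt the Fourier-multiplier trick used to pass from \eqref{kssest} to Theorem \ref{kssyytheorem}, now via spectral calculus of the Dirichlet Laplacian $-\Delta_D$ on $\Kout$. Setting $V=(-\Delta_D)^{-1/2}w$, one checks that $V$ inherits the Dirichlet condition, has zero Cauchy data, and satisfies $\square V=(-\Delta_D)^{-1/2}F$, since $(-\Delta_D)^{-1/2}$ commutes with $\partial_t$ and intertwines with $\Delta$ on the Dirichlet domain. Applying \eqref{2.9} to $V$ then controls its LHS by $C\int_0^T\|F(t,\cd)\|_{\dot H^{-1}_D(\Kout)}\,dt$. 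Extending test functions in $\dot H^1_D(\Kout)$ by zero to $\dot H^1(\R^n)$ transfers Lemma \ref{lemmahar2} to $\Kout$, and H\"older on $\{|x|<1\}$ identifies this quantity with the RHS of \eqref{2.10}. Conversely, since $w=(-\Delta_D)^{1/2}V$, the unweighted bound $\|w(T,\cd)\|_{L^2(\Kout)}=\|\nabla V(T,\cd)\|_{L^2}\le\|V'(T,\cd)\|_{L^2}$ follows from functional calculus, and the weighted space-time LHS of $w$ is recovered from that of $V'$ via $L^2$-boundedness of the Dirichlet Riesz transform $\nabla(-\Delta_D)^{-1/2}$ on the $A_2$-weighted space $L^2(\Kout,\langle x\rangle^{-1-2\delta}\,dx)$ (with the analog for the $\log$-weighted norm).

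The main obstacle I anticipate is this weighted $L^2$-boundedness of the Dirichlet Riesz transform on nontrapping exterior domains: while essentially standard in harmonic analysis, it is the principal input beyond \eqref{2.9} itself and the step that will require the most care.
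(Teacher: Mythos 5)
There are two genuine gaps, one in each half. For \eqref{2.9}, your treatment of the region near the obstacle invokes the variable-coefficient KSS estimates of \cite{MS}, but those are proved by a multiplier argument whose boundary terms have a favorable sign only for \emph{star-shaped} obstacles, whereas the theorem is stated for general nontrapping $\Kob$. The exterior ingredient that actually makes the nontrapping case work is local energy decay, i.e.\ the uniform bound \eqref{2.13} coming from Burq's resolvent estimates \cite{B} (equivalently the classical local decay of Morawetz, Ralston and Strauss); your sketch never uses the nontrapping hypothesis at all. Relatedly, you cannot ``apply the free-space local estimate directly'' on a dyadic annulus with $k\ge1$: $w$ is not a solution of a free wave equation on $\Rn$, so one must cut off (as the paper does with $\rho$, $\tilde w=\rho w$), and the commutator terms $-2\nabla_x\rho\cdot\nabla_x w-(\Delta\rho)w$, supported near the obstacle, are exactly what \eqref{2.13} (together with \eqref{2.12}) is needed to absorb. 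This cutoff-plus-local-decay gluing is the paper's route to both \eqref{2.9} (via \cite{KSS}, \cite{Metcalfe}) and \eqref{2.10}.

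For \eqref{2.10}, your idea of running the $|D|^{-1}$ trick through the Dirichlet functional calculus, $V=(-\Delta_D)^{-1/2}w$, stands or falls on the weighted bound for the exterior Dirichlet Riesz transform, and this is not ``essentially standard'': $\nabla(-\Delta_D)^{-1/2}$ on the complement of a compact obstacle is \emph{not} a Calder\'on--Zygmund operator. Scattering off the obstacle produces a long-range kernel tail of size comparable to $\langle x\rangle^{1-n}|x-y|^{-(n-1)}$ (the same mechanism responsible for the known failure of its $L^p$ boundedness for large $p$ in exterior domains), so weighted $L^2$ bounds with weights that grow at infinity cannot be quoted from $A_2$ theory and would need a proof that is likely comparable in difficulty to the estimate you are trying to establish. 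The paper deliberately avoids this: the inverse-gradient trick is performed only in free space (Theorem \ref{kssyytheorem}, and \eqref{2.12} via \eqref{2.11}), where the needed mapping property is the elementary kernel bound $O(|x-y|^{-n+1})$ for $\Delta^{-1}\partial_j$ plus Young's inequality, and the obstacle is then handled by the cutoff argument and \eqref{2.13}. Indeed, the introduction advertises precisely that this strategy avoids delicate commutator/functional-calculus arguments involving $|D|^{-1}$ in the exterior setting, which is what your proposal reintroduces. Your transference of Lemma \ref{lemmahar2} to $\Kout$ by zero extension, and the identity $\|w(T,\cd)\|_{L^2(\Kout)}=\|\nabla V(T,\cd)\|_{L^2(\Kout)}$, are fine; the weighted Riesz step and the missing local energy decay input are the real obstructions.
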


The first estimate is due to Keel, Smith and Sogge \cite{KSS}.  See also Metcalfe \cite{Metcalfe}.  Its proof uses the nonobstacle version \eqref{kssest} and a variant of the latter which says that if $v$ is as in \eqref{inhom}, then
\begin{multline*}
(\log(2+T))^{-1/2}\|\, \langle x\rangle^{-1/2}v'\|_{L^2(S_T)}+\|\, \langle x\rangle^{-1/2-\delta}v'\|_{L^2(S_T)}
\\
\le C_\delta \|G\|_{L^2(S_T)}, \quad \text{if } \, \, G(t,x)=(\partial^2_t-\Delta)v(t,x)=0, \, \, \text{for } \, \, |x|>2.
\end{multline*}
See \cite{KSS} and \cite{Metcalfe} for a proof of this estimate,
though it follows more simply from the arguments in \cite{MS, MS3}.

For us, it is convenient to use a slightly stronger version, which says that for $\delta>0$,
\begin{multline}\label{2.11}
(\log(2+T))^{-1/2}\|\, \langle x\rangle^{-1/2}v'\|_{L^2(S_T)}+\|\, \langle x\rangle^{-1/2-\delta}v'\|_{L^2(S_T)}
\\
\le C_\delta \|\langle x\rangle^{1/2+\delta}G\|_{L^2(S_T)}.
\end{multline}
This estimate follows from the nonobstacle version of Proposition 2.2 in \cite{MS3}
(cf. Lemma 4.2 below).  The nonobstacle variant follows from the same multiplier argument that was used in \cite{MS3}, and indeed the arguments are slightly simpler for the Minkowski space version.  When $n\ge4$ we can use this estimate since when $\delta<1/2$ one has that when $j=1,2,\dots,n$,
$$\|\, \langle x\rangle^{1/2+\delta}(\Delta^{-1}\partial_j F)(t,\cd)\|_{L^2(\Rn)}\le C_\delta \|F(t,\cd)\|_{L^2(\Rn)}, \quad \text{if } \, F(t,x)=0, \, |x|\ge 2,$$
by Young's inequality and the fact that the kernel of $\Delta^{-1}\partial_j$ is $O(|x-y|^{-n+1})$.  Therefore, for $n\ge4$, \eqref{2.11}
and
%
the proof of Theorem \ref{kssyytheorem} shows that if $w$ is as in \eqref{2.5}, then
\begin{multline}\label{2.12}
(\log(2+T))^{-1/2}\bigl\|\, \langle x\rangle^{-1/2}w\bigr\|_{L^2(S_T)}+\| \langle x \rangle^{-1/2-\delta}w\|_{L^2(S_T)}
\\
\le C_\delta\|F\|_{L^2(S_T)}, \quad \text{if } \, \, F(t,x)=(\partial^2_t-\Delta)w(t,x)=0, \, \, \text{for } \, |x|>2.
\end{multline}
When $n=3$, in \cite{YY}, Du and Zhou used the sharp Huygens principle and arguments of \cite{KSS} to show that in this case \eqref{2.12} is a consequence of \eqref{kyy}.

The final estimate we require for the proof of \eqref{2.10} is for solutions to Dirichlet-wave equations 
\eqref{2.8}.  Since $\Kob$ is nontrapping one can show that
\begin{equation}\label{2.13}
\sum_{|\alpha|\le 1}\| \partial^\alpha w\|_{L^2([0,T]\times \{x\in \Kout: \, |x|<2\})} \le C\int_0^T\|F(t,\cd)\|_{L^2(\Kout)}\, dt,
\end{equation}
with the constant $C$ being independent of $T$.  This follows from the nontrapping assumption and certain resolvent estimates (see Burq \cite{B}, Theorem 3).  Of course \eqref{2.13} is essentially equivalent to the classical local decay estimates in \cite{M} and \cite{MRS}.

Using \eqref{2.13} we see that the analog of \eqref{2.10} holds if the norms in the left are taken over $[0,T]\times \{x\in \Kout: \, |x|<2\}$.  Let us now present the standard argument from \cite{SS} and \cite{KSS} that shows that \eqref{kssest}, \eqref{2.12} and \eqref{2.13} control the remaining piece where $|x|>2$.

To do this, we choose $\rho\in C^\infty(\Rn)$ satisfying $\rho(x)=1$, $|x|\ge 2$, and $\rho(x)=0$, $|x|\le 1$.  If we set $\tilde w(t,x)=\rho(x)w(t,x)$ then of course $w(t,x)=\tilde w(t,x)$ when $|x|>2$.  Also, $\tilde w$ solves the Minkowski space equation $(\partial_t^2-\Delta)\tilde w= \rho F -2\nabla_x\rho \cdot \nabla_x w-(\Delta \rho)w$.  Note that the last two terms vanish when $|x|>2$.  Therefore, if we apply \eqref{kssyy} and \eqref{2.12} to $\tilde w$ we deduce that
\begin{multline*}
(\log(2+T))^{-1/2}\|\, \langle x\rangle^{-1/2}w\|_{L^2([0,T]\times \{x\in \Kout: \, |x|>2\})}
\\+\|\, \langle x\rangle^{-1/2-\delta}w\|_{L^2([0,T]\times \{x\in \Kout: \, |x|>2\})}
\le C_\delta \int_0^T\|\, \langle x\rangle^{-(n-2)/2}F(t,\cd)\|_{L^1_rL^2_\omega(\Kout)}\, dt
\\
+C_\delta \sum_{|\alpha|\le 1}\|\partial_x^\alpha w\|_{L^2([0,T]\times \{x\in \Kout: \, |x|<2\})}.
\end{multline*}
Since the last term in the right can be controlled using \eqref{2.13}, we conclude that we have the desired bounds in the region where $|x|>2$, which completes the proof of \eqref{2.10}.

\medskip

We also require a variant of \eqref{2.9}-\eqref{2.10} which involves the following ``admissible" vector fields
$$\{Z\}=\{\, \partial_0,\dots,\partial_n,\Omega_{ij}: \, 1\le i<j\le n\, \},$$
where,  
$\Omega_{ij}=x_i\partial_j-x_j\partial_i, \quad 1\le i<j\le n$,
are the generators of spatial rotations.  If one uses standard elliptic regularity arguments, as in \cite{KSS}, one finds that Theorem \ref{kyob} yields the following useful result.

\begin{corr}\label{cork}  Let  $n\ge 3$, and let $\Kob\subset \Rn$ and $w$ be as above.  Then if $\delta>0$ and $N=1,2,\dots$ are fixed
\begin{align}\label{2.14}
&\sum_{|\alpha|\le N}\Bigl[
\bigl(\log(2+T)\bigr)^{-1/2}\|\langle x
\rangle^{-1/2}Z^\alpha w'\|_{L^2(\Stk)} + \|\langle
x\rangle^{-1/2-\delta}Z^\alpha w'\|_{L^2(\Stk)} 
\\
&\qquad \qquad \qquad+\|Z^\alpha w'(T,\cd)\|_{L^2(\Kout)}\Bigr]
\notag
\\
&\le C_\delta\int_0^T\sum_{|\alpha|\le N}\| Z^\alpha F(t,\cd)\|_{L^2(\Kout)}\, dt
\notag
\\
&\qquad \qquad \qquad+C\sum_{|\alpha|\le N-1}\Bigl[\, \sup_{0<s<T}\|Z^\alpha F(s,\cd)\|_{L^2(\Kout)}+\|Z^\alpha F\|_{L^2(S^\Kob_T)}\Bigr]
\notag
\end{align}
and
\begin{align}\label{2.15}
&\sum_{|\alpha|\le N}\Bigl[
\bigl(\log(2+T)\bigr)^{-1/2}\|\langle x \rangle^{-1/2} Z^\alpha w\|_{L^2(\Stk)}
+ \|\langle x\rangle^{-1/2-\delta}Z^\alpha w\|_{L^2(\Stk)}
\\
&\qquad \qquad \qquad+\|Z^\alpha w(T,\cd)\|_{L^2(\Kout)} \Bigr],
\notag
\\
&\le C_\delta \int_0^T\sum_{|\alpha|\le N}\bigl(\, \|\, 
\langle x\rangle^{-(n-2)/2}Z^\alpha  F(t,\cd)\|_{L^1_rL^2_\omega(\Kout)} 
+\|Z^\alpha F(t,\cd)\|_{L^2(\Kout)}\, \bigr)\, dt\notag
\\
&\qquad \qquad \qquad+C\sum_{|\alpha|\le N-1}\Bigl[\, \sup_{0<s<T}\|Z^\alpha F(s,\cd)\|_{L^2(\Kout)}+\|Z^\alpha F\|_{L^2(S^\Kob_T)}\Bigr]
\notag.
\end{align}
\end{corr}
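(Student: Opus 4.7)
The plan is to combine Theorem \ref{kyob} with standard elliptic regularity and a cutoff near the obstacle, following the template in \cite{KSS}. First, apply Theorem \ref{kyob} to $\partial_t^k w$ for $0\le k\le N$. Since $\partial_t$ commutes with $\square$ and preserves the Dirichlet boundary condition, $\partial_t^k w$ satisfies \eqref{2.8} with forcing $\partial_t^k F$; however, for $k\ge 2$ the initial data $(\partial_t^k w(0,\cd),\partial_t^{k+1}w(0,\cd))$ is no longer zero but equals a polynomial expression in spatial derivatives of $\partial_t^j F(0,\cd)$ for $j\le k-1$, obtained from the equation itself. Splitting $\partial_t^k w$ into its zero-data Duhamel part (to which \eqref{2.9}-\eqref{2.10} apply directly) plus a homogeneous solution carrying these initial data (to which the energy inequality applies), I obtain the bounds of the corollary in the special case $Z^\alpha=\partial_t^k$; the initial-data contribution is bounded by $\sup_{0<s<T}\|Z^\beta F(s,\cd)\|_{L^2(\Kout)}$ with $|\beta|\le N-1$, accounting for the sup term on the right of \eqref{2.14}-\eqref{2.15}.

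Next I would trade time derivatives for spatial ones by applying elliptic regularity to $\Delta w=\partial_t^2 w-F$ with the Dirichlet boundary condition. Boundary elliptic regularity on $\{|x|<R\}\cap\Kout$ combined with its interior analog on $|x|\ge R$ yields
\[
\sum_{|\beta|\le j+2}\|\partial_x^\beta w(t,\cd)\|_{L^2(\Kout)}\le C\sum_{|\beta|\le j}\bigl(\|\partial_x^\beta\partial_t^2 w(t,\cd)\|_{L^2(\Kout)}+\|\partial_x^\beta F(t,\cd)\|_{L^2(\Kout)}\bigr)+C\|w(t,\cd)\|_{L^2(\Kout)}.
\]
Iterating this to convert pairs of spatial derivatives into time derivatives, and combining with Step 1, I control all mixed derivatives $\partial_t^k\partial_x^\beta w$ of total order $\le N+1$; after time integration the $F$-terms produced by the iteration contribute the $\sum_{|\alpha|\le N-1}\|Z^\alpha F\|_{L^2(\Stk)}$ piece of the right-hand side.

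To upgrade ordinary derivatives $\partial^\alpha$ to admissible $Z^\alpha$, I would use a cutoff $\rho\in C^\infty(\Rn)$ with $\rho\equiv 1$ for $|x|\ge 3$ and $\rho\equiv 0$ for $|x|\le 2$, decomposing $w=(1-\rho)w+\rho w$. On the support of $1-\rho$ the rotations satisfy $|\Omega_{ij}|\lesssim|\nabla_x|$ since $|x|$ is bounded, so $Z^\alpha(1-\rho)w$ is controlled by Step 2. The function $\tilde w=\rho w$ extends to all of $\Rn$ and solves the Minkowski equation $\square\tilde w=\rho F+[\square,\rho]w$, where the commutator is supported in the fixed compact shell $\{2<|x|<3\}$; since every $Z\in\{\partial_0,\ldots,\partial_n,\Omega_{ij}\}$ commutes with $\square$ on $\Rn$, applying the Minkowski estimates \eqref{kssest} and \eqref{kyy} to $Z^\alpha\tilde w$ delivers the far-field part of the bound, with the commutator contribution $Z^\alpha[\square,\rho]w$ absorbed using Step 2. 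The principal bookkeeping obstacle is ensuring that when $Z^\alpha$ of order $N$ falls on $[\square,\rho]w$ or on the lower-order $F$-terms from elliptic regularity, the result is bounded by $\sum_{|\alpha|\le N-1}$ of $F$ rather than $\sum_{|\alpha|\le N}$: this works because $[\square,\rho]$ is first-order in $w$ with compactly supported coefficients, providing exactly the one-order reduction needed for the iteration to close with the norms in \eqref{2.14}-\eqref{2.15}.
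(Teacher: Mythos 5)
Your outline follows the intended route (the one the paper attributes to \cite{KSS}): commute $\partial_t^k$, which preserves the Dirichlet condition; trade spatial for time derivatives near $\Kob$ by elliptic regularity applied to $\Delta w=\partial_t^2w-F$, which is where the $\sum_{|\alpha|\le N-1}$ terms in $F$ originate; and cut off away from $\Kob$, where the full family $\{Z\}$ commutes with $\square$, to invoke the Minkowski estimates. However, two steps do not close as written. First, the far-field commutator: you propose to apply \eqref{kssest} and \eqref{kyy} to $Z^\alpha\tilde w$ and to ``absorb'' $Z^\alpha[\square,\rho]w$ using Step 2. In \eqref{kssest} and \eqref{kyy} the forcing is measured in $\int_0^T\|\cdot\|_{L^2}\,dt$, i.e.\ $L^1_tL^2_x$, whereas Step 2 only gives sup-in-time and (weighted) $L^2_{t,x}$ control of the $\le N+1$ derivatives of $w$ appearing in the commutator; passing from $L^2_t$ (or $\sup_t$) to $L^1_t$ on $[0,T]$ costs a factor of $T^{1/2}$ (or $T$), which destroys the uniformity in $T$ that \eqref{2.14}--\eqref{2.15} assert. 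The paper's proof of Theorem \ref{kyob} avoids exactly this by splitting the forcing of $\tilde w$: the piece $\rho F$ goes into the $L^1_tL^2_x$ norms, while the compactly supported commutator terms are measured in $L^2(\Stk)$ via the estimates \eqref{2.11}--\eqref{2.12} and then controlled by the local bound \eqref{2.13}. You need the vector-field analog of that splitting; it is this step, together with the elliptic regularity of Step 2 applied in the space-time $L^2$ sense, that produces the $\sum_{|\alpha|\le N-1}\|Z^\alpha F\|_{L^2(\Stk)}$ term on the right.

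Second, the nonzero Cauchy data of $\partial_t^kw$ for $k\ge2$ cannot be dispatched by ``the energy inequality.'' The energy inequality controls only $\sup_t\|\cdot\|_{L^2}$ of the homogeneous piece; it does not control the terms $(\log(2+T))^{-1/2}\|\langle x\rangle^{-1/2}\cdot\|_{L^2(\Stk)}$ and $\|\langle x\rangle^{-1/2-\delta}\cdot\|_{L^2(\Stk)}$ on the left of \eqref{2.14}--\eqref{2.15} uniformly in $T$ (integrating the energy bound in time gives growth like $T^{1/2}$). What is needed is the exterior KSS/local energy estimate with nonvanishing Cauchy data, which is true for nontrapping $\Kob$ (it is contained in \cite{KSS} and \cite{Metcalfe}, resting on the local decay input from \cite{B}), but it is an additional ingredient beyond Theorem \ref{kyob} as stated; alternatively one can note that in the nonlinear applications the forcing vanishes for $t\le1/2$, so all the time-differentiated data vanish and this issue disappears. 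With these two repairs, and with the bound $\|Z^\beta F(0,\cd)\|_{L^2}\le\sup_{0<s<T}\|Z^\beta F(s,\cd)\|_{L^2}$ for $|\beta|\le N-1$ quantifying the data contribution as you intended, your argument matches the standard one.
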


\newsection{The Strauss conjecture in 4-dimensions}

To prove Theorem \ref{theorem1} we shall require a couple of Sobolev
estimates involving the vector fields
$\{Z\}=\{\partial_0,\dots,\partial_4,\Omega_{ij}: \, 1\le i<j\le 4\}$ introduced before.

\begin{lemma}\label{lemma3.1}  If 
$h\in C^\infty(\Rf)$ and $R>1$ then
\begin{equation}\label{31}
\|h\|_{L^\infty(\{|x|\in [R,R+1]\})}\le CR^{-3/2}\sum_{|\alpha|\le 2,
j\le 1}\|\Omega^\alpha \nabla^j_x h\|_{L^2(\{|x|\in [R-1,R+2]\})},
\end{equation}
and
\begin{equation}\label{32}\|h\|_{L^4(\{|x|\in [R,R+1]\})}\le CR^{-3/4}\sum_{|\alpha|\le
1}\|Z^\alpha h\|_{L^2(\{|x|\in [R-1,R+2]\})}.\end{equation}
\end{lemma}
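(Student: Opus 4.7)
The plan is to convert to polar coordinates and decouple the Sobolev embeddings in the radial and angular factors. Set $v(r,\omega)=h(r\omega)$, so that the annular region $\{|x|\in[R-1,R+2]\}$ corresponds to the product $[R-1,R+2]\times S^3$, and the volume element $r^3\,dr\,d\omega$ has weight $r^3\sim R^3$ uniformly since $R>1$. Thus conversions between the weighted norms on the annulus and the unweighted norms on the product cost a factor $R^{3/2}$ in $L^2$ and $R^{3/4}$ in $L^4$. The rotation generators satisfy $\Omega = r\cdot(\text{tangent field on }S^3)$, so $|\nabla_\omega^\alpha v|\le C r^{-|\alpha|}\sum_{|\beta|\le|\alpha|}|\Omega^\beta h|$, and the factor $r^{-|\alpha|}$ is uniformly bounded on the relevant region.

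For \eqref{31}, I would combine the Sobolev embedding $H^2(S^3)\hookrightarrow L^\infty(S^3)$ on the compact $3$-manifold $S^3$ with the one-dimensional embedding $H^1([R-1,R+2])\hookrightarrow L^\infty([R,R+1])$; applied in succession they yield
\[\|v\|_{L^\infty([R,R+1]\times S^3)}^2 \le C\sum_{|\alpha|\le 2,\,j\le 1}\int_{R-1}^{R+2}\int_{S^3}|\partial_r^j\nabla_\omega^\alpha v|^2\,d\omega\,dr.\]
Multiplying and dividing the right-hand side by $r^3\sim R^3$ converts the unweighted integral into the annular $L^2$ norm with a factor $R^{-3}$, and taking a square root produces the factor $R^{-3/2}$ in \eqref{31}, while $|\partial_r h|\le|\nabla_x h|$ and the conversion from $\nabla_\omega^\alpha$ to $\Omega^\alpha$ match the $\Omega^\alpha \nabla_x^j$ pattern on the right.

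For \eqref{32}, I would instead apply the Sobolev embedding $H^1\hookrightarrow L^4$ directly on the four-dimensional product $[R-1,R+2]\times S^3$, whose total volume is bounded independently of $R$; this gives
\[\|v\|_{L^4([R,R+1]\times S^3)}\le C\|v\|_{H^1([R-1,R+2]\times S^3)}.\]
Converting the left-hand norm to its weighted annular form costs a factor $R^{3/4}$, while converting each $L^2$ component on the right costs $R^{3/2}$, so that the net ratio is $R^{3/4}/R^{3/2}=R^{-3/4}$, matching \eqref{32}.

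The main bookkeeping point in both cases is the translation between $\nabla_\omega^\alpha v$ and the ambient rotation fields $\Omega^\alpha h$, together with the fact that $\partial_r$ and $\Omega$ do not literally commute. The commutators they generate are of the same (or lower) differential order and carry additional factors of $1/r\le 1$ on our region, so they absorb harmlessly into the right-hand sides at the cost of enlarging the implicit constants.
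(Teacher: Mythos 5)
Your argument is essentially the paper's own: Sobolev on $S^3$ composed with the one-dimensional Sobolev lemma in $r$ for \eqref{31}, the $L^4$-Sobolev embedding on the product $[R-1,R+2]\times S^3$ for \eqref{32}, and the $r^3\,dr\,d\omega$ volume element supplying the factors $R^{-3/2}$ and $R^{-3/4}$. The only (harmless) inaccuracy is in your closing bookkeeping remark: $\Omega_{ij}$ applied to $h$ coincides exactly with the corresponding unit-sphere angular derivative of $v(r,\cdot)$ with no $r^{-|\alpha|}$ factors, and $\partial_r$ in fact commutes with the $\Omega_{ij}$, so no commutator terms arise at all.
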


\begin{proof}  To prove \eqref{31}, we first use polar coordinates and Sobolev's lemma for $S^3$ to obtain
$$\sup_{\omega\in S^3}|h(r\omega)|\le C\sum_{|\alpha|\le 2}\Bigl(\, \int_{S^3}|\Omega^\alpha h(r\omega)|^2 d\omega\, \Bigr)^{1/2},$$
where, as before, $d\omega$ is the standard volume element on $S^3$.  From this and an application of the 1-dimensional Sobolev lemma, we conclude that the left side of \eqref{31} is dominated by 
$$\sum_{|\alpha|\le 2, j\le 1}\Bigl(\, \int_{R-1}^{R+2}\int_{S^3}|\Omega^\alpha \partial^j_r h(r\omega)|^2 \, dr d\omega\, \Bigr)^{1/2},
$$
which leads to \eqref{31} due to the fact that that the volume element for $\Rf$ is a constant times $r^3dr d\omega$.  Estimate \eqref{32} similarly follows from the $L^4$-Sobolev estimates for $\R\times S^3$. \end{proof}

We now have all the ingredients we require for the proof of Theorem \ref{theorem1}.  As in \cite{KSS}, it is convenient to show that one can solve an equivalent nonlinear equation with zero data to avoid having to deal with issues regarding compatibility conditions for the data.  

To make the reduction, we first note that if the data satisfies \eqref{Stdata} with $\varepsilon$ small, we can find a local solution $u$ to $\square u=F_\sigma(u)$ in $0<t<1$.  Our assumptions on the data and the finite propagation speed for $\square$ furthermore ensure that if $\varepsilon>0$ is small enough this local solution will satisfy
\begin{equation}\label{33}
\sup_{0\le t\le1}\sum_{|\alpha|\le 2, j\le 1}\|Z^\alpha \nabla^j u(t,\cd)\|_{L^2(\Kfout)}\le C\varepsilon,\end{equation}
for some constant $C$.  Since we are assuming that the data are
compactly supported, we also have that $u(t,x)$ vanishes for large $x$
when $0\le t\le 1$.  

Using this local solution we can set up the iteration argument that will be used to prove Theorem \ref{theorem1}.  We first fix a bump function $\eta\in C^\infty(\R)$ satisfying $\eta(t)=1$ if $t\le 1/2$ and $\eta(t)=0$ if $t>1$.  Set
$$u_0=\eta u;$$
then
$$\square u_0=\eta F_\sigma(u)+[\square,\eta]u.$$
So $u$ solves $\square u= F_\sigma(u)$ if and only if $w=u-u_0$ solves
\begin{equation}\label{34}
\begin{cases}
\square w=(1-\eta)F_\sigma(u_0+w)-[\square,\eta]u
\\
w|_{\partial\Kob}=0
\\
w(0,x)=\partial_tw(0,x)=0.
\end{cases}
\end{equation}

We shall solve this equation by iteration.  Set $w_0\equiv 0$ and define $w_k$, $k=1,2,3,\dots$  inductively by requiring that
\begin{equation}\label{35}
\begin{cases}
\square w_k=(1-\eta)F_\sigma(u_0+w_{k-1})-[\square,\eta]u
\\
w_k|_{\partial\Kob}=0
\\
w_k(0,x)=\partial_tw_k(0,x)=0.
\end{cases}
\end{equation}

Our aim then is to show that if the constant $\varepsilon$ in \eqref{Stdata} is small then so is
\begin{multline*}M_k(T)=\sum_{|\alpha|\le 2}\Bigl(\|\langle x\rangle^{-1/2-\delta}Z^\alpha
w_k\|_{L^2(S_T^\Kob)}+\sup_{0<t<T}\|Z^\alpha w_k(t,\cd)\|_2\Bigr)
\\
+\sum_{|\alpha|\le 2}\Bigl(\|\langle x\rangle^{-1/2-\delta}Z^\alpha
w_k'\|_{L^2(S_T^\Kob)}+\sup_{0<t<T}\|Z^\alpha w_k'(t,\cd)\|_2\Bigr),
\end{multline*}
for every $k=1,2,3,\dots$ and $T>0$,
with $\delta=3(\sigma-2)/4$.  Recall that we are assuming that $\sigma>2$.

To estimate $M_k(T)$, we use \eqref{2.15} to estimate its first part and \eqref{2.14} to estimate its second part.  We conclude that there is a constant $C_0$, depending on the bounds in \eqref{33}, and a constant $C$, depending on the constant in \eqref{2.14} so that for $k=1,2,3,\dots$ we have
\begin{align}\label{36}
M_k&(T)
\\
&\le C_0\varepsilon \notag
\\
& +C\int_0^T\sum_{|\alpha|\le 2}
\bigl(\, \| \langle x\rangle^{-1}Z^\alpha F_\sigma(u_0+w_{k-1})(t,\cd)\|_{L^1_rL^2_\omega}+
\|Z^\alpha F_\sigma(u_0+w_{k-1})(t,\cd)\|_{L^2}\, \bigr)\, dt
\notag
\\
&+C\sum_{|\alpha|\le 1}\Bigl[ \, \sup_{0<s<T}\|Z^\alpha F_\sigma(u_0+w_{k-1})(s,\cd)\|_{L^2(\Kfout)}+\|Z^\alpha F_\sigma(u_0+w_{k-1})\|_{L^2(S^\Kob_T)}\Bigr]\notag
\\
&=C_0\varepsilon + I + II + III + IV.\notag
\end{align}

We claim that  if $\varepsilon>0$ is small enough then we have
\begin{equation}\label{37}M_k(T)\le 2C_0\varepsilon, \quad T>0,\end{equation}
for every $k$.  Since $w_{0}\equiv 0$ and \eqref{33} is valid, the first term, $M_1(T)$, in the induction satisfies these bounds if $\varepsilon$ is small.  Therefore, our task will be to show that if \eqref{37} is valid with $k$ replaced by $k-1$, then \eqref{37} must hold.

Let us start by handling the main term in the right side of \eqref{36}, which is $I$.  We note that our assumptions regarding $F_\sigma$ imply that
\begin{align}\label{38}
&\sum_{|\alpha|\le 2}|Z^\alpha F_\sigma(v(t,x))|
\\
&\quad\le C|v|^{\sigma-2}\Bigl(\, |v|\sum_{|\alpha|\le 2}|Z^\alpha v|+\sum_{|\alpha|\le 1}|Z^\alpha v|^2\Bigr)\notag
\\
&\quad\le C\Bigl(\sum_{|\alpha|\le 2, j\le 1} \|Z^\alpha \nabla^j v(t,\cd)\|_{L^2(\Kfout)}\Bigr)^{\sigma-2} \, \langle x\rangle^{-\frac{3(\sigma-2)}2}\Bigl(\, |v|\sum_{|\alpha|\le 2}|Z^\alpha v|+\sum_{|\alpha|\le 1}|Z^\alpha v|^2\Bigr).\notag
\end{align}
To handle the contribution of the last factor to $I$ when $v=w_{k-1}+u_0$, we note that Sobolev estimates for $S^3$ yield
\begin{align*}
\bigl\|\, v(t,r\cd)\sum_{|\alpha|\le 2}|Z^\alpha &v(t,r\cd)|\,
\bigr\|_{L^2(S^3)}^2+
\bigl\|\sum_{|\alpha|\le 1}|Z^\alpha v(t,r\cd)|^2\bigr\|_{L^2(S^3)}
\\
&\le \|v(t,r\cd)\|_{L^\infty(S^3)}\sum_{|\alpha|\le 2}\|Z^\alpha v(t,r\cd)\|_{L^2(S^3)}+
\sum_{|\alpha|\le 1}\|Z^\alpha v(t,r\cd)\|^2_{L^4(S^3)}
\\
&\le C\sum_{|\alpha|\le 2}\|Z^\alpha v(t,r,\cd)\|_{L^2(S^3)}^2.
\end{align*}
By combining the last two inequalities, one sees that
\begin{align*}
I
&\le C(M_{k-1}(T)+\varepsilon)^{\sigma-2}\int_0^T\sum_{|\alpha|\le
  2}\Bigl(\|\langle x\rangle^{-1/2-\delta}Z^\alpha
w_{k-1}\|^2_{L^2(\Kfout)}\\
&\qquad\qquad\qquad\qquad\qquad\qquad\qquad\qquad\qquad\qquad
+\|\langle x\rangle^{-1/2-\delta}Z^\alpha u_0\|^2_{L^2(\Kfout)}\Bigr)\, dt
\\
&\le C(M_{k-1}(T)+\varepsilon)^\sigma.
\end{align*}

To handle $II$, we note that \eqref{31} and \eqref{32} imply that
\begin{multline*}
\bigl\|\, |v(t,\cd)|^{\sigma-1}\sum_{|\alpha|\le 2}|Z^\alpha v(t,\cd)|\, \bigr\|_{L^2}
+\bigl\|\, |v(t,\cd)|^{\sigma-2}\sum_{|\alpha|\le 1}|Z^\alpha v(t,\cd)|^2\bigr\|_{L^2}
\\
\le C\sum_{|\alpha|\le 2, j\le 1}\|\, \langle
x\rangle^{-3/4-\delta}Z^\alpha \nabla^j v(t,\cd)\|_2^2
\sum_{|\alpha|\le 2, j\le 1}\|Z^\alpha \nabla^j_xv(t,\cd)\|^{\sigma-2}_2.
\end{multline*}
Consequently, the arguments used for $I$, imply that
\begin{align*}
II&\le C(M_{k-1}(T)+\varepsilon)^{\sigma-2}\sum_{|\alpha|\le 2, j\le
  1}\Bigl(\|\langle x\rangle^{-3/4-\delta}Z^\alpha \nabla^j w_{k-1}\|^2_{L^2(S^\Kob_T)}
\\&\qquad\qquad\qquad\qquad\qquad\qquad\qquad\qquad\qquad\qquad
+\|\langle x\rangle^{-3/4-\delta} Z^\alpha \nabla^j u_0\|^2_{L^2(S^\Kob_T)}\Bigr)
\\
&\le C(M_{k-1}(T)+\varepsilon)^\sigma.
\end{align*}

Since similar arguments show that $III$ and $IV$ also enjoy these bounds we conclude that
$$M_k(T)\le C_0\varepsilon + (M_{k-1}(T)+\varepsilon)^\sigma,$$
which in turn implies \eqref{37} by induction.

To finish and show that $u_k=w_k+u_0$, $k=1,2,3,\dots$ converges to a solution $u$ of \eqref{Str} it suffices to show that
$$
A_k(T)=\sum_{j\le 1}\Bigl(\|\, \langle x\rangle^{-1/2-\delta}\nabla^j (u_k-u_{k-1})\, \|_{L^2(S^\Kob_T)}+\sup_{0<t<T}\| \nabla^j(u_k-u_{k-1})(t,\cd)\|_2\Bigr)$$
tends to zero as $k\to \infty$.  Since $|F_\sigma(v)-F_\sigma(w)|\le C|v-w|(\, |v|^{\sigma-1}+|w|^{\sigma-1})$, the proof of \eqref{36} can be adapted to show that
$$A_k(T)\le CA_{k-1}(T)\, \bigl(M_{k-1}(T)+M_{k-2}(T)\bigr)^{\sigma-1},$$
which, by \eqref{37}, implies that $A_k(T)\le \tfrac12 A_{k-1}(T)$ if $\varepsilon>0$ is small.  Since $A_1$ is finite, the claim follows.

This completes the proof of Theorem \ref{theorem1}.

\newsection{Almost global existence in 4-dimensions}

In this section we shall prove Theorem \ref{qthm}.  In addition to the
estimates collected in \S 2, we shall need a slight variant of Lemma
\ref{lemmahar} as well as variable coefficient versions of the ``KSS
estimates" \eqref{kssest} and \eqref{2.14} that are due to the second
and third authors \cite{MS}.

The first result mentioned is the following:

\begin{lemma}\label{lemma40}  Suppose that $g,h\in C^\infty_0(\Rn)$,
  $n\ge 4$.\footnotemark\, Then if $R>0$
\begin{multline}\label{4.0}
\|gh\|_{L^2(\{|x|\in [R,2R]\})}
\\
\le C\sum_{|\alpha|\le \frac{n+1}2}\|\, |x|^{-(n-2)/4}\Omega^\alpha
g\|_{L^2(\{|x|\in [R,2R]\})}
\|\, |x|^{-(n-2)/4}h'\|_{L^2(\{|x|>R\})}.
\end{multline}
\end{lemma}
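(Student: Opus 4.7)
The plan is to carry out a radial-angular decomposition, apply Sobolev embedding on $S^{n-1}$ to the $g$-factor, and use Lemma \ref{lemmahar} (the Hardy-type inequality already established in the excerpt) for the $h$-factor. Writing the norm in polar coordinates,
\begin{equation*}
\|gh\|_{L^2(\{|x|\in[R,2R]\})}^2 = \int_R^{2R}\|(gh)(r\cdot)\|_{L^2(S^{n-1})}^2\, r^{n-1}\, dr,
\end{equation*}
I would bound the inner norm by H\"older followed by the Sobolev embedding $H^{(n+1)/2}(S^{n-1})\hookrightarrow L^\infty(S^{n-1})$, which is legitimate because the $\Omega_{ij}$ generate spherical tangential differentiation:
\begin{equation*}
\|(gh)(r\cdot)\|_{L^2(S^{n-1})} \le C\Bigl(\sum_{|\alpha|\le(n+1)/2}\|\Omega^\alpha g(r\cdot)\|_{L^2(S^{n-1})}\Bigr)\|h(r\cdot)\|_{L^2(S^{n-1})}.
\end{equation*}
Pulling the $h$-factor out in $L^\infty_r$ separates the two variables.

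For the $h$-factor I would invoke Lemma \ref{lemmahar} with $v=h$ to obtain
\begin{equation*}
\|h\|_{L^\infty_r L^2_\omega(|x|>R)} \le C R^{-1/2}\|\,|x|^{-(n-3)/2}\nabla_x h\|_{L^2(|x|>R)}.
\end{equation*}
For the $g$-factor, on the annulus $r\in[R,2R]$ I write $r^{n-1} = r^{n/2}\cdot r^{(n-2)/2}$ and use $r\sim R$ to get
\begin{equation*}
\int_R^{2R}\sum_{|\alpha|\le(n+1)/2}\|\Omega^\alpha g(r\cdot)\|_{L^2(S^{n-1})}^2\, r^{n-1}\, dr \le C R^{(n-2)/2}\sum_{|\alpha|\le(n+1)/2}\|\,|x|^{-(n-2)/4}\Omega^\alpha g\|_{L^2(\{|x|\in[R,2R]\})}^2.
\end{equation*}
Combining these two bounds yields
\begin{equation*}
\|gh\|_{L^2}^2 \le C R^{(n-4)/2}\|\,|x|^{-(n-3)/2}\nabla_x h\|_{L^2(|x|>R)}^2 \sum_{|\alpha|\le(n+1)/2}\|\,|x|^{-(n-2)/4}\Omega^\alpha g\|_{L^2}^2.
\end{equation*}

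The remaining task is to trade the weight $|x|^{-(n-3)/2}$ supplied by Lemma \ref{lemmahar} for the desired $|x|^{-(n-2)/4}$. A direct computation gives $(n-3)/2 - (n-2)/4 = (n-4)/4$, which is nonnegative precisely because $n\ge 4$, so for $|x|>R$ one has $|x|^{-(n-3)/2}\le R^{-(n-4)/4}|x|^{-(n-2)/4}$. Squaring and integrating, this cancels exactly the prefactor $R^{(n-4)/2}$ accumulated above, and since $|\nabla_x h|\le|h'|$ the conclusion \eqref{4.0} follows. The only delicate step is this final weight trade, and it is also the one that explains the hypothesis $n\ge 4$ in the statement; everything else is bookkeeping of Sobolev and Hardy inequalities on spheres and annuli.
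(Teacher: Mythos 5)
Your proof is correct and is essentially the paper's own argument: both proceed by H\"older in the angular variable plus Sobolev embedding on $S^{n-1}$ for the $g$-factor and an application of Lemma \ref{lemmahar} to the $h$-factor. The only difference is that you spell out the weight bookkeeping (trading $|x|^{-(n-3)/2}$ for $R^{-(n-4)/4}|x|^{-(n-2)/4}$ on $|x|>R$, which is where $n\ge 4$ enters), a step the paper leaves implicit in ``the result now follows from an application of \eqref{h.1}.''
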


\footnotetext{This lemma can also be recovered in $n=3$, but it
  requires a straightforward modification of the proof of Lemma \ref{lemmahar}.}

\begin{proof}  By H\"older's inequality and Sobolev's lemma for $S^{n-1}$ the left side of \eqref{4.0} is dominated by
\begin{multline*}
\|g\|_{L^2_rL^\infty_\omega(\{|x|\in [R,2R]\})}\|h\|_{L^\infty_rL^2_\omega(\{|x|\in [R,2R]\})}
\\
\le C\sum_{|\alpha|\le \frac{n+1}2}\|\Omega^\alpha g\|_{L^2(\{|x|\in [R,2R]\})}\|h\|_{L^\infty_rL^2_\omega(\{|x|\in [R,2R]\})}.
\end{multline*}
The result now follows from an application of \eqref{h.1}.
\end{proof}

The variable coefficient variants of the ``KSS estimate" 
involve solutions $\phi \in C^\infty(\R_+\times {\mathbb R}^n\backslash \Kob)$ of the Dirichlet-wave equation
\begin{equation}\label{4.1}
\begin{cases}
\square_h \phi = F
\\
\phi|_{\partial \Kob}=0
\\
\phi|_{t=0}=\partial_t\phi|_{t=0}=0,
\end{cases}
\end{equation}
where $\Kob\subset \Rn$ is now assumed to be a star-shaped obstacle containing the origin and
\begin{equation}\label{4.2}
\square_h\phi = (\partial_t^2-\Delta)\phi + \sum_{\alpha, \beta=0}^n h^{\alpha\beta}(t,x)\partial_\alpha\partial_\beta\phi.
\end{equation}
We shall assume that the $h^{\alpha\beta}$ satisfy the symmetry  conditions
\begin{equation}\label{4.3}
h^{\alpha\beta}=h^{\beta\alpha},
\end{equation}
as well as the size conditions
\begin{equation}\label{4.4}
|h|=\sum_{\alpha,\beta=0}^n |h^{\alpha\beta}(t,x)|\le \delta,
\end{equation}
with $\delta>0$ being a small number.

The estimates we require then are Lemma 5.2 and Lemma 5.3 from \cite{MS}.
The first is the following:

\begin{lemma}
\label{lemma4.1}
Suppose that $\mathcal{K}$ is as above and $n\ge 3$. Let $\phi\in C^\infty(\R_+\times\extn)$ be a
solution of \eqref{4.1}. Suppose that $h^{\alpha\beta}$ satisfies \eqref{4.3} and \eqref{4.4}
for a small choice of $\delta$. Then, if $\varepsilon>0$ and $N=0,1,2,\dots$ are fixed there is a constant $C$ so that for any $T>0$
\begin{gather}
(\log(2+T))^{-1/2} \sum_{|\mu|\le N} \|\langle x\rangle^{-1/2}\partial^\mu\phi'\|_{L^2(\Stk)} \label{4.5} \\
+\sum_{|\mu|\le N} \Bigl(\|\langle x\rangle^{-1/2-\varepsilon} \partial^\mu \phi'\|_{L^2(\Stk)} +\|\partial^\mu\phi'(T,\cd)\|_{L^2(\Kout)}\Bigr)\nonumber \\
\le  C\sum_{j,k\le N}\left(
\int_0^T \int_\extn \left(|\nabla \partial_t^j \phi|+\frac{|\partial_t^j \phi|}{r}\right)
|\Box_h \partial_t^k \phi|\:dx\:dt\right)^{1/2} \nonumber \\
+C\sum_{j,k\le N} \left[\int_0^T \int_{\extn}
\left(|\partial h|+\frac{|h|}{r}\right) |\nabla \partial_t^j
\phi| \left(|\nabla \partial_t^k \phi|+\frac{|\partial_t^k \phi|}{r}\right)
\:dx\:dt\right]^{1/2} \nonumber \\
+ C\sum_{|\mu|\le N-1} \|\Box \partial^\mu \phi\|_{L^2(\Stk)}
+ C\sum_{|\mu|\le N-1} \|\Box \partial^\mu \phi(T,\cd)\|_{L^2(\Kout)}
. \nonumber
\end{gather}
\end{lemma}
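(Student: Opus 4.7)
The plan is to follow the multiplier approach of Metcalfe--Sogge \cite{MS}, which is the variable-coefficient analogue of the standard Morawetz-type derivation of \eqref{kssest}. The first step is a reduction: using the equation $\square_h \phi = F$ together with the smallness \eqref{4.4} (so that $(\partial_t^2 - \Delta)\phi$ can be solved for in terms of $\partial_t^2 \phi$, $F$, and error terms of size $\delta$), an elliptic regularity argument on each time slice lets one trade each spatial derivative for a time derivative up to $O(\delta)$ times itself plus lower-order remainders controlled by $\sum_{|\mu|\le N-1} \|\square \partial^\mu \phi\|$. Hence it suffices to establish \eqref{4.5} with $\partial^\mu$ replaced by $\partial_t^j$ for $j \le N$, which is what the right-hand side is already formulated in terms of.

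For each fixed $j \le N$, I would apply the multiplier
\[
X_\rho \phi = f_\rho(r)\,\partial_r \phi + \tfrac{n-1}{2r} f_\rho(r)\, \phi, \qquad f_\rho(r) = \frac{r}{r+\rho},
\]
pair it with $\square_h \partial_t^j \phi$, and integrate over $\Stk$. For the flat operator $\partial_t^2 - \Delta$ this reproduces the classical Morawetz identity: integration by parts yields a positive bulk term comparable to
\[
\int_{\Stk} \frac{\rho}{(r+\rho)^2}\Bigl(|\nabla \partial_t^j \phi|^2 + \tfrac{|\partial_t^j \phi|^2}{r^2}\Bigr)\, dx\, dt,
\]
a boundary integral on $\R \times \partial \Kob$ with a favorable sign thanks to the star-shapedness of $\Kob$ (the inward normal points into $\Kob$, and $f_\rho(r)/r \ge 0$), and a spacetime integral of $X_\rho \partial_t^j \phi \cdot (\partial_t^2 - \Delta) \partial_t^j \phi$. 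The last integral is bounded by Cauchy--Schwarz by the first square-root term on the right-hand side of \eqref{4.5}, with $\square_h$ in place of $\partial_t^2 - \Delta$. To recover the $\bigl(\log(2+T)\bigr)^{-1/2}$ weight one then runs the standard KSS trick: apply the identity with $\rho$ ranging over dyadic values in $[1,T]$ and sum, which gives the $\langle x\rangle^{-1/2}$ weighted bound with the log-loss; the $\langle x\rangle^{-1/2-\varepsilon}$ bound comes without loss by choosing a single $f$ with $-f'(r)\gtrsim \langle r\rangle^{-1-2\varepsilon}$. The energy bound $\|\partial^\mu \phi'(T,\cd)\|_{L^2}$ is obtained in parallel from the standard energy identity applied to $\partial_t^j \phi$.

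Finally, the variable-coefficient part $\sum h^{\alpha\beta} \partial_\alpha \partial_\beta \partial_t^j \phi$ must be moved from the bulk term to the right-hand side. Integrating by parts once to balance derivatives produces exactly the error integrand
\[
\bigl(|\partial h| + \tfrac{|h|}{r}\bigr)\, |\nabla \partial_t^j \phi|\,\bigl(|\nabla \partial_t^k \phi| + \tfrac{|\partial_t^k \phi|}{r}\bigr),
\]
which matches the second square-root term in \eqref{4.5}. Because \eqref{4.4} holds with $\delta$ small, the portion of the perturbation that still contains $h$ itself multiplied by the positive bulk quadratic form can be absorbed into the left-hand side.

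I expect the main obstacle to be the bookkeeping in the variable-coefficient integration by parts, specifically verifying that every arising term either (i) has a genuinely positive sign and can be dropped, (ii) is absorbed by smallness of $\delta$, or (iii) fits the prescribed right-hand side shape with $|\partial h|+|h|/r$. A secondary, more technical point is the boundary contribution at $\partial \Kob$: since the multiplier $X_\rho$ is not exactly the generator of dilations, one must check that after including the $h^{\alpha\beta}$ terms the boundary integrand still has the correct sign on $\partial\Kob$; smallness of $h$ together with star-shapedness handles this, but it is the place where the star-shaped hypothesis (rather than merely nontrapping) is essential.
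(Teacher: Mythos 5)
The paper gives no proof of this lemma at all --- it is quoted verbatim as Lemma 5.2 of Metcalfe--Sogge \cite{MS} --- and your sketch (the KSS multiplier $f_\rho(r)\partial_r+\tfrac{n-1}{2}\tfrac{f_\rho(r)}{r}$ summed over dyadic $\rho\in[1,T]$ for the log-weighted bound, a single increasing $f$ for the $\langle x\rangle^{-1/2-\varepsilon}$ bound, the parallel energy identity, star-shapedness for the sign of the $\partial\Kob$ boundary terms, integration by parts in the $h^{\alpha\beta}$ terms, and elliptic regularity to pass from $\partial_t^j$ to general $\partial^\mu$, which is what produces the $\Box\partial^\mu\phi$ remainders in \eqref{4.5}) is essentially the argument of that paper. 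Two small precision points: you want $f'(r)\gtrsim\langle r\rangle^{-1-2\varepsilon}$ (positive, not $-f'$), and the second-derivative term $h^{\alpha\beta}f\,\partial_\alpha\partial_r\phi\,\partial_\beta\phi$ left after one integration by parts is not absorbed into the small-weighted bulk form; it is handled by symmetrizing via \eqref{4.3} into a total $\partial_r$-derivative and integrating by parts once more, which yields exactly the $\bigl(|\partial h|+|h|/r\bigr)$ bulk errors plus time-slice and $\partial\Kob$ boundary terms that are absorbed using $\delta$-smallness and star-shapedness.
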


Here, we are using the notation $|\partial
h|=\sum_{\alpha,\beta,\gamma=0}^n |\partial_\gamma h^{\alpha\beta}(t,x)|$.
Note that in \eqref{4.5} $r$ is bounded from below as $0\in \Kob$.

The other estimate we require is the following

\begin{lemma}
\label{lemma4.2}
Suppose that $\mathcal{K}$ is as above and $n\ge 3$. Let $\phi\in C^\infty(\R_+\times\extn)$ be a
solution of \eqref{4.1}. Suppose that $h^{\alpha\beta}$ satisfies \eqref{4.3} and \eqref{4.4}
for a small choice of $\delta$. Then, if $\varepsilon>0$ and $N=0,1,2,\dots$ are fixed there is a constant $C$ so that for any $T>0$
\begin{gather}
(\log(2+T))^{-1/2} \sum_{|\mu|\le N} \|\langle x\rangle^{-1/2}Z^\mu \phi'\|_{L^2(\Stk)}
\label{4.6} \\
+\sum_{|\mu|\le N}\Bigl( \|\langle x\rangle^{-1/2-\varepsilon} Z^\mu \phi'\|_{L^2(\Stk)}+\|Z^\mu\phi'(T,\cd)\|_{L^2(\Kout)}\Bigr) \nonumber \\
\le
C\sum_{|\mu|,|\nu|\le N}\left(
\int_0^T \int_\extn \left(|\nabla Z^\mu \phi|+\frac{|Z^\mu \phi|}{r}\right)
|\Box_h Z^\nu \phi|\:dx\:dt\right)^{1/2} \nonumber \\
+C\sum_{|\mu|,|\nu|\le N}
\left[\int_0^T \int_{\extn} \left(|\partial h|+\frac{|h|}{r}\right)
|\nabla Z^\mu \phi| \left(|\nabla Z^\nu \phi|+\frac{|Z^\nu \phi|}{r}\right)
\:dx\:dt\right]^{1/2} \nonumber \\
+ C\sum_{|\mu|\le N+1} \|\partial_{x}^\mu \phi'\|_{L^2([0,T]\times\{|x|<1\})}.\nonumber
\end{gather}
\end{lemma}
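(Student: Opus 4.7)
The proof I would give follows the same multiplier scheme as Lemma \ref{lemma4.1}, but with two additional ingredients: a commutator analysis that moves the spatial rotations $\Omega_{ij}$ past $\square_h$, and a localization argument near $\partial\Kob$ that compensates for the fact that $\Omega_{ij}\phi$ need not vanish on $\partial\Kob$. The key pointwise identity is $[\partial_t^2-\Delta,\Omega_{ij}]=0$, so iterating
\begin{equation*}
[\square_h,\Omega_{ij}]\phi=\sum_{\alpha,\beta=0}^{n}\bigl((\Omega_{ij}h^{\alpha\beta})\partial_\alpha\partial_\beta\phi+h^{\alpha\beta}[\partial_\alpha\partial_\beta,\Omega_{ij}]\phi\bigr)
\end{equation*}
I would obtain $\square_h Z^\mu\phi=Z^\mu F+E_\mu$ with $|E_\mu|\le C\sum_{|\nu|\le|\mu|}(|\partial h|+|h|/r)|\nabla Z^\nu\phi|$ on $\Kout$ (where $r\gtrsim 1$ since $0\in\Kob\subset\{|x|<1\}$).

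Next I would fix $\chi\in C^\infty(\Rn)$ with $\chi\equiv 1$ on $\{|x|\le 1\}$ and $\chi\equiv 0$ on $\{|x|\ge 2\}$, and split $Z^\mu\phi=\chi Z^\mu\phi+(1-\chi)Z^\mu\phi$. For the exterior piece $\psi_\mu:=(1-\chi)Z^\mu\phi$, which vanishes in a neighborhood of $\partial\Kob$ and hence may be extended by zero to $\Rn$, I would apply the boundaryless analog of Lemma \ref{lemma4.1}: the star-shaped multiplier argument gives that version immediately, with the boundary integral simply absent. Since
\begin{equation*}
\square_h\psi_\mu=(1-\chi)(Z^\mu F+E_\mu)+[\square_h,1-\chi]Z^\mu\phi,
\end{equation*}
feeding this into the nonobstacle estimate produces, after Cauchy--Schwarz, the first integral on the right side of \eqref{4.6} from $(1-\chi)Z^\mu F$, the second integral from the substitution of $E_\mu$, and an error supported in $\{1\le|x|\le 2\}$ with at most one derivative of $Z^\mu\phi$ arising from $[\square_h,1-\chi]$.

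For the compactly supported piece $\chi Z^\mu\phi$, supported in $\{|x|\le 2\}$, the coefficients of the rotations are uniformly bounded there, so the pointwise estimate
\begin{equation*}
|Z^\mu\phi(t,x)|\le C_N\sum_{|\alpha|\le|\mu|}|\partial^\alpha\phi(t,x)|,\qquad|x|\le 2,
\end{equation*}
reduces the task to controlling $\sum_{|\alpha|\le N}\|\partial^\alpha\phi'\|_{L^2([0,T]\times\{|x|<2\}\cap\Kout)}$. This local bound, together with the analogous $\{1\le|x|\le 2\}$ error from the exterior piece, is handled by elliptic regularity for the Dirichlet Laplacian on $\Kout\cap\{|x|<2\}$: one trades $\partial_t^2=\Delta-\square+(\square-\square_h)+\square_h$ to convert time derivatives into spatial ones and vice versa, reducing everything to the norm $\sum_{|\mu|\le N+1}\|\partial_x^\mu\phi'\|_{L^2([0,T]\times\{|x|<1\})}$ that appears as the final term of \eqref{4.6}. (Here the $\square-\square_h$ contributions are of the same form as the second integral on the right side of \eqref{4.6} and are absorbed into it.)

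The main obstacle I anticipate is bookkeeping in the commutator expansion: verifying that every term produced by iterating $[\square_h,Z^\mu]$ factors as $(|\partial h|+|h|/r)$ times a derivative of $Z^\nu\phi$ with $|\nu|\le N$ (so that it fits the second right-hand integral of \eqref{4.6}), and confirming that the square-root Cauchy--Schwarz structure of the first two right-hand terms matches exactly what the multiplier identity yields. The interplay between the cutoff errors and the elliptic-regularity exchange near $\partial\Kob$, so that nothing more than $\|\partial_x^\mu\phi'\|_{L^2([0,T]\times\{|x|<1\})}$ remains as the local remainder, is the other place where I would need to be careful.
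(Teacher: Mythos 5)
The paper does not actually prove this lemma: it is quoted verbatim from \cite{MS} (Lemma 5.3 there), so your sketch has to be measured against the argument given in that reference, whose overall skeleton --- cut off away from the obstacle, apply the boundaryless multiplier estimate to $(1-\chi)Z^\mu\phi$, and control the near-obstacle remainder by local energy norms with one extra derivative --- you have correctly identified. However, two of your concrete steps do not work as written. First, the pointwise commutator bound $|E_\mu|\le C\sum_{|\nu|\le|\mu|}(|\partial h|+|h|/r)|\nabla Z^\nu\phi|$ is false: $\Omega_{ij}h^{\alpha\beta}$ is only $O(r\,|\partial h|)$ and $h^{\alpha\beta}[\partial_\alpha\partial_\beta,\Omega_{ij}]\phi$ is $O(|h|\,|\nabla Z^\nu\phi|)$, neither of which is dominated by $(|\partial h|+|h|/r)|\nabla Z^\nu\phi|$ when $r$ is large, so $E_\mu$ cannot be absorbed into the second right-hand integral of \eqref{4.6}. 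Fortunately this step is also unnecessary: the first right-hand term keeps $|\Box_h Z^\nu\phi|$ intact precisely so that no commutation with $\square_h$ is performed inside the lemma; the commutators $[Z^\nu,\Box_h]$ are estimated only later, where the specific structure $h^{\gamma\delta}=-(1-\eta)B^{\gamma\delta}(u_{k-1},u_{k-1}')$ is available (cf.\ \eqref{4.13} and \eqref{4.17}). The fix is simply not to expand.

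Second, and more substantively, your near-boundary reduction does not account for the fixed-time term $\sum_{|\mu|\le N}\|Z^\mu\phi'(T,\cd)\|_{L^2(\Kout)}$ on the left side. The cutoff piece $\chi Z^\mu\phi$ contributes a fixed-time local energy at $t=T$, and no spacetime norm such as $\sum_{|\mu|\le N+1}\|\partial_x^\mu\phi'\|_{L^2([0,T]\times\{|x|<1\})}$ can dominate a fixed time slice; appealing to Lemma \ref{lemma4.1} instead would import the $\|\Box\,\partial^\mu\phi\|$ terms that are absent from the right side of \eqref{4.6}. What actually handles this is running the energy/multiplier identity for $Z^\mu\phi$ on the exterior domain itself: the lateral boundary integrals over $[0,T]\times\partial\Kob$ (nonzero because $Z^\mu\phi$ need not vanish on $\partial\Kob$) are bounded, using the boundedness of the coefficients of $Z$ on the compact boundary together with a trace estimate, by the local spacetime norm with $N+1$ derivatives --- which is exactly why that last term carries one more derivative than the left side. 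A minor further point: with your cutoff supported in $\{|x|\le 2\}$ the local errors live on $\{|x|<2\}$, not $\{|x|<1\}$; this is a harmless normalization (rescale so that $\Kob\subset\{|x|<1/2\}$), but elliptic regularity does not shrink the region as your write-up suggests.
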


Let us now present the existence argument.  As before, we first note that we can find a local solution $u$ to our equation $\square u=Q(u,u',u'')$ in $0<t<1$ if the data satisfies \eqref{qdata} with $\varepsilon>0$ small and $N=21$.\footnotemark\,  Our assumptions on the data ensure that
\begin{equation}\label{4.7}\sup_{0\le t\le 1}\sum_{|\alpha|\le 22}\|Z^\alpha u(t,\cd)\|_{L^2(\Kfout)}\le C\varepsilon,
\end{equation}
for some constant $C$.  Since we are assuming that the data are compactly supported, we also have that $u(t,x)$ vanishes for large $x$ when $0\le t\le 1$.

\footnotetext{This choice of $N$ is certainly not sharp.}

As before, we shall use this local solution to set up the iteration argument that will be used to prove Theorem \ref{qthm}.  We fix a bump function $\eta\in C^\infty(\R)$ satisfying $\eta(t)=1$ if $t\le 1/2$ and $\eta(t)=0$ if $t>1$, and set $u_0=\eta u$.  Then $\square u_0 = \eta \square u + [\square,\eta]u$, and so $u$ will solve $\square u = Q(u,u',u'')$ for $0<t<T_\varepsilon$, where $T_\varepsilon$ is as in \eqref{te}, if and only if $w=u-u_0$ solves
\begin{equation}\label{4.8}
\begin{cases}
\square w = (1-\eta)Q(w+u_0,(w+u_0)', (w+u_0)'')-[\square,\eta]u
\\
w|_{\partial\Kob}=0
\\
w(0,x)=\partial_tw(0,x)=0.
\end{cases}
\end{equation}

We shall solve this equation by iteration.  As before, set $w_0\equiv 0$ and define $w_k$, $k=1,2,3,\dots$ inductively by requiring that
\begin{equation}\label{4.9}
\begin{cases}
\square w_k=(1-\eta)
Q_k
-[\square,\eta]u
\\
w_k|_{\partial\Kob}=0
\\
w_k(0,x)=\partial_tw_k(0,x)=0,
\end{cases}
\end{equation}
where, for shorthand, we are taking
\begin{equation}\label{4.10}
Q_k=Q(u_{k-1}, u_{k-1}', u_k''),
\end{equation}
with, as before, $u_j=w_j+u_0$.

Our aim then is to show that if the constant $\varepsilon$ in \eqref{qdata} is small then so is
\begin{multline}
M_k(T)\label{4.11}
=\sum_{|\alpha|\le 20}\Bigl( \sup_{0\le t\le T}\|\partial^\alpha
w'_k(t,\cd)\|_2
+(\log(2+T))^{-\tfrac12}\|\langle x\rangle^{-\tfrac12}\partial^\alpha w_k'\|_{L^2(\Stk)} \\+ \|\langle x\rangle^{-\tfrac32}\partial^\alpha w_k'\|_{L^2(\Stk)}\Bigr) 
\\
+\sum_{|\alpha|\le 19}\Bigl(\sup_{0\le t\le T} \|Z^\alpha w'_k(t,\cd)\|_2+(\log(2+T))^{-\tfrac12}\|\langle x\rangle^{-\tfrac12}Z^\alpha w_k'\|_{L^2(\Stk)} \\+ \|\langle x\rangle^{-\tfrac32}Z^\alpha w_k'\|_{L^2(\Stk)}\Bigr)
\end{multline}
and
\begin{equation}\label{4.12}
N_k(T)=\sum_{|\alpha|\le 13}(\log(2+T))^{-\tfrac12}\|\langle x\rangle^{-\tfrac12}Z^\alpha w_k\|_{L^2(\Stk)},
\end{equation}
provided that $T\le T_\varepsilon$.

If we use \eqref{4.5}-\eqref{4.6} we conclude that there must be an absolute constant $C_0$ so that
\begin{multline}
\label{4.13}
M_k(T)\le 
C\sum_{|\mu|,|\nu|\le 20} \Bigl(\int_0^T \int_{\Kfout}
\Bigl(|\nabla \partial^\mu w_k|+\frac{|\partial^\mu w_k|}{r}\Bigr) |\partial^\nu \Box_h w_k|\:dx\:dt\Bigr)^{1/2}
\\
\noalign{\vskip8pt}
+ C\sum_{|\mu|,|\nu|\le 20}
\Bigl(\int_0^T \int_{\Kfout} \Bigl(|\nabla \partial^\mu w_k| +\frac{|\partial^\mu w_k|}{r}\Bigr)
|[\partial^\nu,\Box_h]w_k|\:dx\:dt\Bigr)^{1/2}
\\
\noalign{\vskip8pt}
+C\sum_{|\mu|,|\nu|\le 20} \Bigl[\int_0^T \int_{\Kfout} \Bigl(|\partial h|+\frac{|h|}{r}\Bigr) |\nabla
\partial^\mu w_k| \Bigl(|\nabla \partial^\nu w_k|+\frac{|\partial^\nu w_k|}{r}\Bigr)\:dx\:dt\Bigr]^{1/2}
\\
\noalign{\vskip8pt}
+C \sum_{|\mu|,|\nu|\le 19} \Bigl(\int_0^T \int_{\Kfout} \Bigl(|\nabla Z^\mu w_k| +\frac{|Z^\mu w_k|}{r}\Bigr)
|Z^\nu \Box_h w_k|\:dx\:dt
\Bigr)^{1/2}
\\
\noalign{\vskip8pt}
+C\sum_{|\mu|,|\nu|\le 19} \Bigl(\int_0^T \int_{\Kfout} \Bigl(|\nabla Z^\mu w_k|+\frac{|Z^\mu w_k|}{r}\Bigr)
|[Z^\nu,\Box_h]w_k|\:dx\:dt\Bigr)
^{1/2}
\\
\noalign{\vskip8pt}
+C\sum_{|\mu|,|\nu|\le 19} \Bigl[\int_0^T \int_{\Kfout}
\Bigl(|\partial h|+\frac{|h|}{r}\Bigr) |\nabla Z^\mu
w_k|\Bigl(|\nabla Z^\nu w_k|+\frac{|Z^\nu w_k|}{r}\Bigr)\:dx\:dt\Bigr]^{1/2}
\\
\noalign{\vskip8pt}
+C\sup_{0\le t\le T}\Bigl[\sum_{|\mu|\le 19} \|\partial_{t,x}^\mu \Box w_k(t,\cd)\|_2 \Bigr]
+C\sum_{|\mu|\le 19} \|\partial_{t,x}^\mu \Box w_k\|_{L^2(\Stk)}
\\
\noalign{\vskip8pt}
=
I + II + III +\cdots + VIII.
\end{multline}
Here, we set $h^{\gamma\delta} = -(1-\eta) B^{\gamma\delta}(u_{k-1},u'_{k-1})$. 

If we use \eqref{2.15} and \eqref{4.7}, we find that there is a uniform constant $B_0$ so that
\begin{multline}\label{4.14}
N_k(T)\le B_0\varepsilon+C\sum_{|\mu|\le 13}\int_0^T\bigl(\|\langle x\rangle^{-1}Z^\mu Q_k\|_{L^1_rL^2_\omega}+\|Z^\mu Q_k\|_{L^2_x}\bigr)\, dt
\\
+C\sum_{|\mu|\le 12}\Bigl(\sup_{0<s<T}\|Z^\mu Q_k(s,\cd)\|_{L^2_x}
+\|Z^\mu Q_k\|_{L^2(\Stk)}\Bigr).
\end{multline}

Taking $A_0$ to be a constant chosen sufficiently large, depending on
the constants in \eqref{4.7} and \eqref{4.13} as well as $B_0$ from \eqref{4.14},
we claim that if $\varepsilon$ is small enough and $T_\varepsilon = \exp(c/\varepsilon)$, with $c$ small and fixed, then
\begin{equation}\label{4.15}
M_k(T)+N_k(T)\le 4A_0\varepsilon, \quad 0\le T<T_\varepsilon, \quad k=1,2,3\dots .
\end{equation}
Since $w_0\equiv 0$ and \eqref{4.7} holds, the first term in the induction satisfies these bounds for all $T$ if $\varepsilon$ is small.  Therefore, our task will be to show that if \eqref{4.15} is valid with $k$ replaced by $k-1$, then \eqref{4.15} must hold.

Let us first bound $M_k(T)$.  To do this, let us start out by considering the terms in \eqref{4.13} involving the vector fields $\{Z\}$, since they are the most delicate to handle.  In order to estimate the first two of these, $IV$ and $V$, we shall use the fact
\begin{align}\label{4.17}
\sum_{|\mu|\le 19}\Bigl(\, |Z^\mu \square_h w_k|&+|\, [Z^\mu,\square_h]w_k|\, \Bigr)
\\
&\le C\sum_{|\mu|\le 9, a\le 1}|Z^\mu \partial^a u_{k-1}|\sum_{|\nu|\le 19,b\le 1}|Z^\nu \partial^b u_k| \notag
\\
&+C\sum_{|\mu|\le 10, a\le1}|Z^\mu \partial^a u_k|\sum_{|\nu|\le 19, b\le 1}|Z^\nu\partial^b u_{k-1}|
\notag
\\
&+C\sum_{|\mu|\le 9, a\le1}|Z^\mu \partial^a u_{k-1}|\sum_{|\nu|\le 19, b\le 1}|Z^\nu \partial^b u_{k-1}| \notag
\\
&+C |u_{k-1}| \sum_{|\mu|\le 19} |Z^\mu u_0''|
+ C\mathbf{1}_{[0,1]}(t)\sum_{|\mu|\le 20} |Z^\mu u|.
\notag
\end{align}
Note also that, by Hardy's inequality, we can control the other factor in $IV$ and $V$ by noticing that for $0<t<T$ 
\begin{equation*}
\sum_{|\mu|\le 19}\Bigl(\|\nabla_{t,x}Z^\mu w_k(t,\cd)\|_2+\|r^{-1}Z^\mu w_k(t,\cd)\|_2\Bigr)
\le C\sum_{|\mu|\le 19}\|Z^\mu w'_k(t,\cd)\|_2 \le C M_k(T).
\end{equation*}
Therefore, by the Schwarz inequality, we are led to consider the $L^1_tL^2_x$ norm of the terms in the right side of \eqref{4.17}.  If we fix $t>1$ and take the $L^2_x$ norm over a region where $|x|\approx R$, we can use \eqref{31} to conclude that all the resulting terms except for the ones where $b=0$ are controlled by
\begin{align*}
&\sum_{|\alpha|\le 13}\|\langle x\rangle^{-1/2}Z^\alpha u_{k-1}\|_{L^2(|x|\approx R)}
\sum_{|\alpha|\le 19}\|\langle x\rangle^{-1/2}Z^\alpha u'_k\|_{L^2(|x|\approx R)}
\\
&+\sum_{|\alpha|\le 13}\bigl(\|\langle x\rangle^{-1/2}Z^\alpha u_k\|_{L^2(|x|\approx R)}
+\|\langle x\rangle^{-1/2}Z^\alpha u'_k\|_{L^2(|x|\approx R)}\bigr)
\\
&\qquad\qquad\qquad\qquad\times
\sum_{|\alpha|\le 19}\|\langle x \rangle^{-1/2}Z^\alpha u'_{k-1}\|_{L^2(|x|\approx R)}
\\
&+\sum_{|\alpha|\le 13}\|\langle x\rangle^{-1/2}Z^\alpha u_{k-1}\|_{L^2(|x|\approx R)} \sum_{|\alpha|\le 19}\|\langle x\rangle^{-1/2}Z^\alpha u'_{k-1}\|_{L^2(|x|\approx R)}.
\end{align*}
Recall that $u_j=w_j$ if $t\ge 1$.  
If we sum over dyadic $R=2^j$ and apply the Schwarz inequality in $t$, 
we conclude that, when $T\le T_\varepsilon$, the contribution to $IV$ and $V$ of the terms in \eqref{4.17} with $b=1$ is controlled by
\begin{multline}\label{4.18}
C(M_k(T))^{1/2}(\log(2+T))^{1/2}
\\ \times
\bigl[ (N_{k-1}(T)M_k(T))^{1/2}+([N_k(T)+M_k(T)]M_{k-1}(T))^{1/2} +(N_{k-1}(T)M_{k-1}(T))^{1/2}\bigr]
\\
+C(M_k(T))^{1/2} \varepsilon^{1/2}(\varepsilon+M_k(T)+M_{k-1}(T)+N_{k-1}(T)+N_k(T))^{1/2}
\\
\le C\bigl[(\varepsilon \log(2+T_\varepsilon))^{1/2}+\varepsilon^{1/2}\bigr]\bigl(M_k(T)+N_k(T\bigr))+C\varepsilon^{3/2}, \end{multline}
which provides the necessary bounds for these terms.  
The second to last term in \eqref{4.17} contributes bounds of this
type in view of \eqref{4.7} and the fact that $u_0$ vanishes for large
$(t,x)$.  For $A_0$ chosen properly, the last term in \eqref{4.17}
contributes a factor which is bounded by $(A_0/20)^{1/2}(M_k(T))^{1/2}\varepsilon^{1/2}$.

What remains is to estimate the contribution to $IV$ and $V$ of the first three terms in \eqref{4.17} corresponding to $b=0$.  If we apply \eqref{4.0} to each of these with $g$ there being the first factor of each of these three terms and $h$ there being the second factor in each of these terms, we conclude that the terms corresponding to $b=0$ also enjoy the above bounds.

Clearly a similar argument applies to the corresponding terms $I$ and $II$ that only involve the Euclidean vector fields $\partial^\mu$.  These, indeed, are easier to handle since we do not need to treat the case $b=0$ separately.   We can easily adapt the above arguments to see that they satisfy the same bounds that $III$ and $IV$ enjoy.  Since this argument also implies that the remaining terms in the right side of \eqref{4.13} also satisfy these bounds, we conclude that if $\varepsilon$ is small then
$M_k(T)$ is bounded by 
\begin{equation}\label{4.19}\frac{1}{4}(M_k(T)+N_k(T))+
C\varepsilon^{3/2}
+ \frac{A_0}{2}\varepsilon\end{equation}
if $\varepsilon$ is small and $0\le T\le T_\varepsilon$, with
$T_\varepsilon$ as above with $c$ small.

We are left with estimating $N_k(T)$.  This is more straightforward and we essentially repeat the arguments from the last section to handle it.  We first notice that the arguments that were used to handle the terms $IV$ and $V$ in $M_k(T)$ will show that the last three terms in the right side of \eqref{4.14} also are bounded by the right side of \eqref{4.18}.  
All that remains is to control the second term in the right side of \eqref{4.14}, that is, the term involving the $L^1_rL^2_\omega$ norm.  To handle it, we can adapt the arguments that were used to handle the term $I$ in \eqref{36} to see that it too is bounded by the right side of \eqref{4.18}.

Since both $M_k(T)$ and $N_k(T)$ are controlled by \eqref{4.19}, we conclude that, if $\varepsilon>0$ is small enough and if $T_\varepsilon$ is as above, then  \eqref{4.15} must be valid.  Since the arguments at the end of the last section also show that under these assumptions we have that, for 
$T\le T_\varepsilon$,
$$\sum_{j\le1}\Bigl[ (\log(2+T))^{-1/2}\|\langle x\rangle^{-1/2}\nabla^j(u_k-u_{k-1})\|_{L^2(\Stk)} +\sup_{0<t<T}\|\nabla^j (u_k-u_{k-1})(t,\cd)\|_2\Bigr]$$
goes to zero geometrically as $k\to \infty$, the proof is complete.

\newsection{Some Existence Theorems in Other Dimensions}

In \cite{YY} the first and last authors showed that when $n=3$ and $\Kob$ is a star-shaped obstacle the analog of \eqref{q} has solutions with lifespan $c/\varepsilon^2$ if the 3-dimensional analog of \eqref{qdata} holds.  The proof was based on a variant of \eqref{2.15} which says that
$$\langle T\rangle^{-1/4}\sum_{|\alpha|\le N}\|\, \langle x\rangle^{-1/4} Z^\alpha w\|_{L^2(\Stk)}$$ is dominated by the right side of \eqref{2.15}.  This follows from the fact that the nonobstacle variant of this inequality holds together with the arguments from \cite{KSS} that we used here to show that the nonobstacle inequality \eqref{2.10} yields the obstacle version \eqref{2.15}.  If we combine the aforementioned variant of \eqref{2.15} with Lemmas 4.2 and 4.3, then the arguments from the last section provide a somewhat simpler proof of the existence results in \cite{YY} that avoids the use of the scaling vector field $L=t\partial_t + \langle x,\nabla_x\rangle$.

The existence results in \cite{YY} generalized to the obstacle setting one of the sharp existence theorems of Lindblad \cite{Lin} for ${\mathbb R}_+\times {\mathbb R}^3$.  We remark that the interesting problem of showing that, for the obstacle case, there is almost global existence for equations $\square u = Q(u,u',u'')$ when $Q_{uu}(0,0,0)=0$ remains open.  The nonobstacle version of this is also due to Lindblad \cite{Lin}.

We also remark that the arguments from the last section show that for
star-shaped obstacles there is always small data global existence for
equations of this type when $n\ge 5$.  In this case, no assumptions
regarding the $u$-component of the Hessian is required.  On the other
hand, the obstacle version of H\"ormander's result \cite{H} which says that when $n=4$ and $Q_{uu}(0,0,0)=0$ there is global existence for small data is open.

The methods contained herein can also be applied to \eqref{Str} when
$n=3$ and $\sigma > 2$.  In this case, one obtains global existence
for small initial data provided $\sigma > 5/2$, but the methods do not
seem to allow one to approach the critical exponent $p_c=1+\sqrt{2}$.

\end{document}